\documentclass[a4paper, final, notitlepage, 11pt]{article}
\usepackage{latexsym,bm}
\usepackage{amsthm}
\usepackage{amsfonts}
\usepackage{amsmath}
\usepackage[shortalphabetic,abbrev]{amsrefs}
\usepackage[all]{xy}
\usepackage[pdftex,final]{graphics}
\usepackage[dvipsnames]{pstricks}
\usepackage{verbatim}
\usepackage{geometry}
\usepackage{xcolor}
\setlength{\voffset}{22.5mm}

\newcommand{\e}{\varepsilon}
\newcommand{\nequiv}{\stackrel{num}{\sim}}

\newcommand{\B}{\tilde{B}}

\renewcommand{\O}{\mathcal{O}}
\renewcommand{\L}{\mathcal{L}}

\newcommand{\tpi}{\tilde{\pi}}

\newtheorem{theorem}{Theorem}[section]
\newtheorem{proposition}[theorem]{Proposition}
\newtheorem{corollary}[theorem]{Corollary}
\newtheorem{lemma}[theorem]{Lemma}
\theoremstyle{definition}

\theoremstyle{definition}
\newtheorem{remark}{Remark}[section]

\numberwithin{equation}{section}
\renewcommand{\labelenumi}{(\arabic{enumi})}
\begin{document}\title{\Large{\textbf{A New Family of Surfaces of General Type \\with $\mathbf{K^2=7}$ and $\mathbf{p_g=0}$}}}
\date{}
\author{\small{Yifan Chen}}
\maketitle
\renewcommand{\thefootnote}{\fnsymbol{footnote}}
\footnotetext{Yifan Chen: School of Mathematics Sciences, Peking University, Beijing 100871, P.~R.~China.}
\footnotetext{email address: chenyifan1984@gmail.com}
\footnotetext{{\itshape Mathematics Subject Classification (2000)}: 14J10, 14J29.}

\begin{abstract}
We construct a new family of smooth minimal surfaces of general type with $K^2=7$ and $p_g=0.$
We show that for a surface in this family, its canonical divisor is ample and its bicanonical morphism is birational.
We also prove that these surfaces satisfy Bloch's conjecture.
\end{abstract}

\section{Introduction}
Minimal surfaces of general type with $p_g(S)=0$ have been constructed and studied
since the 1930's (cf.~\cite{Campedelli} and \cite{Godeaux}).
These surfaces have invariants $p_g(S)=q(S)=0$ and $1 \le K_S^2 \le 9.$
For each value of $K_S^2,$
except for the case $K_S^2=7,$
there exists nowadays quite a list of examples.
Up to the best knowledge of the author,
there is only one known family of minimal surfaces of general type
with $K^2=7$ and $p_g=0$ (cf.~\cite{survey} Tables 1-3).
This family of surfaces is due to M.~Inoue (cf.~\cite{Inoue}).
In \cite{bidoubleplane}, a family of surfaces of general type with $K^2=7$ and $p_g=0$ is constructed.
We will show in the last section that this family actually consists of Inoue surfaces.

Inoue surfaces with $K^2=7$ are constructed in \cite{Inoue}
as quotients of  complete intersections of codimension two in the product of four elliptic curves
by a fixed point free action.
Inoue surfaces can also be constructed as finite $(\mathbb{Z}/2\mathbb{Z})^2$-covers
of the $4$-nodal cubic surface (cf.~\cite{K27I} Example~4.1).
The bicanonical morphism of Inoue surfaces has degree $2$ and is composed with exactly one involution of $(\mathbb{Z}/2\mathbb{Z})^2.$
We refer to a recent article \cite{Inouemfd},
where the authors use both two constructions to study the deformations of Inoue surfaces and
generalize their results to certain manifolds.

In spite of lack of examples, there are many studies on minimal smooth surfaces of general type with $K^2=7$ and $p_g=0.$
It is shown in \cite{K27I} and \cite{K27II} that the bicanonical morphism of such a surface has degree either $1$ or $2.$
And if the bicanonical morphism has degree $2,$
the surface has a genus $3$ hyperelliptic fibration.
Involutions on surfaces of general type with $K^2=7$ and $p_g=0$ are studied in \cite{Lee} and \cite{bidoubleplane}.
Either article gives a list of numerical possibilities.
However, no new example is constructed (cf.~Section~6).
It is also shown in a pre-version of \cite{Lee} that three quotients of an Inoue surface
by the involutions are all rational.
However, we point out that one of the quotients is birational to an Enriques surface (cf. Section~6).

In this article, we construct a family of surfaces with $K^2=7$ and $p_g=0,$
as finite $(\mathbb{Z}/2\mathbb{Z})^2$-covers of certain weak Del Pezzo surfaces with degree one.
These surfaces have ample canonical divisors.

For a surface $S$ in our family,
we show that the bicanonical morphism of $S$ is not composed with any involution of $(\mathbb{Z}/2\mathbb{Z})^2.$
Indeed, by using the results of \cite{K27II},
we prove by contradiction that $S$ has birational bicanonical morphism.
So the family is indeed a new family.

We show that three quotients of $S$ by the involutions have respectively Kodaira dimensions $-\infty, 0, 1,$
realizing some numerical possibilities of the lists of \cite{bidoubleplane} and \cite{Lee}.
By applying the results of a recent article \cite{BlochInoue},
we prove that $S$ satisfies Bloch's conjecture.

\paragraph{Acknowledgement.}The author would like to thank Professor~Jinxing Cai
for many suggestions, for his patience and for supporting me many years.
The author would like to thank Ingrid~Bauer and Fabrizio~Catanese for interesting discussions on Inoue surfaces.
The author is very grateful to Yongnam~Lee, Wenfei~Liu, Carlos~Rito and Lei~Zhang for many discussions.

\section{Certain Weak Del Pezzo Surfaces of degree one}
We will construct a  family of weak Del Pezzo surfaces of degree one
as blowups of $\mathbb{P}^2$ at eight points.
We use $(x_1:x_2:x_3)$ as the homogeneous coordinates for $\mathbb{P}^2.$
Let $p_1=(1:0:0), p_2=(0:1:0), p_3=(0:0:1)$ and $p_0=(1:1:1),$
and let $p_j'$ be the infinitely near point over $p_j$ , corresponding to the line $\overline{p_jp_0}$, for $j=1, 2, 3.$
We state a lemma on conics passing some of these points.
\begin{lemma}\label{le!conic}
For each $i=1, 2, 3,$ there is a unique conic $c_i$ passing through $p_i,$ $p_{i+1},$ $p_{i+1}',$ $p_{i+2},$ $p_{i+2}'.$
Its equation is $x_i(x_{i+1}+x_{i+2})-x_{i+1}x_{i+2}=0.$
Moreover, $c_i$ does not pass through the point $p_i'.$
\end{lemma}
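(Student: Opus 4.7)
By the cyclic symmetry of the data under the substitution $(x_1,x_2,x_3)\mapsto(x_2,x_3,x_1)$, which fixes $p_0$ and permutes the $p_j$ (and hence the $p_j'$) cyclically, it suffices to treat the case $i=1$ and the other two cases follow by relabeling. I will first record what the infinitely near points mean concretely: a curve passes through $p_j$ and $p_j'$ if and only if it is smooth at $p_j$ with tangent direction along $\overline{p_j p_0}$. Explicitly, these three lines have equations $x_2-x_3=0$, $x_1-x_3=0$, $x_1-x_2=0$ for $j=1,2,3$ respectively.

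\medskip

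For existence, I would simply verify that $c_1:\,x_1(x_2+x_3)-x_2x_3=0$ passes through $p_1,p_2,p_3$ by substitution, and then work in the affine chart $x_2=1$ (resp.\ $x_3=1$) to read off the tangent line to $c_1$ at $p_2$ (resp.\ $p_3$) from the linear part of the defining equation; in both cases one gets exactly the line $\overline{p_j p_0}$. This verifies all five incidence/tangency conditions.

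\medskip

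For uniqueness, the space of plane conics is a $\mathbb{P}^5$, and the three point-conditions together with two tangent-conditions cut out $c_1$ as a solution to a linear system of rank at most $5$. I would first check that $c_1$ is smooth: the symmetric matrix of its quadratic form has a nonzero determinant, so $c_1$ is irreducible. Then if a second conic $c'$ satisfied all the same conditions, B\'ezout's theorem forces $c_1$ and $c'$ to meet in at most four points (counted with multiplicity), yet the tangency at $p_2$ and $p_3$ contributes intersection multiplicity $\ge 2$ at each, and $p_1$ contributes $\ge 1$, for a total $\ge 5$; hence $c_1$ and $c'$ share a component, and irreducibility of $c_1$ forces $c'=c_1$.

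\medskip

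Finally, to see that $c_1$ does not pass through $p_1'$, I would compute the tangent line to $c_1$ at $p_1=(1:0:0)$ in the chart $x_1=1$: the equation becomes $x_2+x_3-x_2x_3=0$, whose linear part gives the tangent $x_2+x_3=0$, which is different from $\overline{p_1p_0}:\,x_2-x_3=0$. The only step requiring a moment of thought is the uniqueness argument, and the smoothness of $c_1$ combined with B\'ezout handles it in one line; everything else is direct substitution.
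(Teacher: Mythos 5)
Your proof is correct and complete: the incidence checks by substitution, the tangency checks via the gradient of $x_1x_2+x_1x_3-x_2x_3$ at $p_2$ and $p_3$, the non-passage through $p_1'$ (tangent $x_2+x_3=0$ versus $\overline{p_0p_1}\colon x_2=x_3$), and the uniqueness via smoothness of $c_1$ plus B\'ezout ($1+2+2=5>4$) all check out, as does the reduction to $i=1$ by the cyclic coordinate permutation. The paper explicitly omits the proof of this lemma, and your argument is precisely the routine verification the author left to the reader, so there is nothing to compare against beyond noting that your write-up supplies it correctly.
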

Here we make a convention that the indices $i \in \{1,2,3\}$ should be understood as residue classes modulo $3$
through the article.
We omit the proof of the lemma.

Let $\sigma \colon W \rightarrow \mathbb{P}^2$ be the blowup of eight points:
$p_0, p_1, p_1', p_2, p_2', p_3, p_3'$ and $p,$
where the eighth point $p$ satisfies:
\begin{enumerate}
\renewcommand{\labelenumi}{(\Roman{enumi})}
    \item $p \not \in \cup_{i=1}^3\{\overline{p_0p_i}:x_{i+1}=x_{i+2}\} \cup_{i=1}^3 \{ \overline{p_{i+1}p_{i+2}}: x_i=0\}.$
    \item $p \not \in c_1 \cup c_2 \cup c_3.$
\end{enumerate}
We remark that such surfaces $W$ are parameterized by $p.$

Denote by $E_j$ (respectively $E_j',$ $E$) the \textbf{total transform} of the point $p_j$ (respectively, $p_j',$ $p$),
and by $L$ the pullback of a general line by $\sigma.$ Then
$Pic(W)=\mathbb{Z}L\oplus \mathbb{Z}E_0\oplus \oplus_{j=1}^3 (\mathbb{Z}E_j\oplus \mathbb{Z}E_j')\oplus \mathbb{Z}E$
and $-K_W \equiv 3L-E_0-\sum\nolimits_{j=1}^{3}(E_j+E_j')-E.$
We list some properties of the surface $W.$
\begin{enumerate}
    \item $W$ is a weak Del Pezzo surface of degree $1,$
          i.e., $-K_W$ is nef and big, and $K_W^2=1.$

          This follows from the fact that any four points of $p_0, p_1, \ldots, p_3'$ and $p$ are not collinear
          (cf.~\cite{Topics}*{Theorem~8.1.7}).

    \item $W$ has exactly six $(-2)$-curves. Their divisor classes are as follows:
          \begin{equation}\label{eq:(-2)curves}
          C_j \equiv L-E_0-E_j-E_j',\  C_j'\equiv E_j-E_j',\ \text{for}\ j=1, 2, 3.
          \end{equation}

          Actually, assume that $C$ is a $(-2)$-curve of $W$
          and its divisor class is $C \equiv xL-a_0E_0-\sum_{j=1}^3 (a_jE_j+a_j'E_j')-aE.$
          If $\sigma(C)$ is a point, then $C$ is one of $C_1', C_2', C_3'.$
          If $c:=\sigma(C)$ is a curve, then $c$ is an irreducible curve of degree $x$
          having multiplicity at least $a_0$ (respectively $a_1, \ldots, a$) at the point $p_0$
          (respectively $p_1, \ldots, p$).
          In particular, $a_0, \ldots, a$ are nonnegative integers.
          If $x=1,$ then $C$ is one of $C_1, C_2, C_3.$

          It suffices to exclude the case $x \ge 2.$
          Since $C^2=-2$ and $K_WC=0,$
          \begin{align*}
          x^2+2=a_0^2+\sum_{j=1}^3(a_j^2+a_j'^2)+a^2,&&
          3x=a_0+\sum_{j=1}^3(a_j+a_j')+a.
          \end{align*}
          By Cauchy's inequality,
          $9x^2 \le (x^2+2) \cdot 8$ and thus $x \le 4.$

          If $x=4,$ then the equality holds, and $a_0=\ldots=a=2.$
          Then $CC_1'=x-a_0-a_1-a_1'=-2.$ Thus $C \ge C_1'.$
          This gives a contradiction and thus $x \not= 4.$

          Assume that $x=2.$
          Then $c$ is an irreducible smooth conic.
          So $a_0, \ldots, a \in \{0, 1\}.$
          Moreover, $a_0+\sum_{j=1}^3(a_j+a_j')+a=a_0^2+\sum_{j=1}^3(a_j^2+a_j'^2)+a^2=6.$
          So exactly six of $a_0, \ldots, a$ are $1.$
          Using Lemma~\ref{le!conic} and the condition (II),
          we see that there is no smooth conic  passing through six points of $p_0, \ldots, p.$
          Hence $x \not =2.$

          Assume that $x=3.$
          Then $c$ is an irreducible cubic curve.
          So $a_0, \ldots, a \in \{0, 1, 2\}.$
          Moreover, $a_0+\sum_{j=1}^3(a_j+a_j')+a=9$ and $a(a-1)+\sum_{j=1}^3(a_j(a_j-1)+a_j'(a_j'-1))+a(a-1)=2.$
          So exactly one of $a_0 ,\ldots, a$ is $2,$ and the others are $1.$
          If $a_0=2$ or $a_j=2$ or $a_j'=2$, then $CC_j=-1.$
          This gives a contradiction.
          So $a=2$ and $C \equiv -K_W-E.$
          It is more complicated to exclude this case.
          For later use, we state a lemma.

          \begin{lemma}\label{le!empty}$|-K_W-E|= \emptyset.$
          \end{lemma}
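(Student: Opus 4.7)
The plan is to argue by contradiction. Suppose $D \in |-K_W - E|$ is a nonzero effective divisor, and consider the plane cubic $c := \sigma_* D$. Matching the class $D \equiv 3L - E_0 - \sum_{j=1}^3(E_j + E_j') - 2E$ against the class of the strict transform of $c$ plus an effective exceptional contribution forces $c$ to have multiplicity $\ge 1$ at each of $p_0, p_1, p_2, p_3$, multiplicity $\ge 2$ at $p$, and---whenever $\mathrm{mult}_{p_j} c = 1$---tangent direction at $p_j$ equal to the line $\overline{p_0p_j}$. I would then perform a case analysis on the structure of $c$.

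If $c$ is irreducible, the genus bound for plane cubics allows at most one double point, forcing $\mathrm{mult}_p c = 2$ with all other listed multiplicities equal to $1$ and the three tangent conditions active at $p_1, p_2, p_3$. Such cubics form the $3$-dimensional vector space spanned by $x_1^2(x_2 - x_3)$, $x_2^2(x_1 - x_3)$, $x_3^2(x_1 - x_2)$. Writing a general element as $d x_1^2(x_2-x_3) + f x_2^2(x_1-x_3) + h x_3^2(x_1-x_2)$ and requiring its partial derivatives to vanish at $p = (\alpha:\beta:\gamma)$ produces a $3 \times 3$ linear system in $(d, f, h)$ whose coefficient determinant I would compute to equal $6\alpha\beta\gamma(\alpha-\beta)(\beta-\gamma)(\alpha-\gamma)$. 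By condition~(I), this product is nonzero, so no such irreducible cubic exists.

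If $c$ is reducible, the possibilities are a union of three lines (with possible multiplicities) or $c = q + \ell$ with $q$ an irreducible (hence smooth) conic and $\ell$ a line. Multi-line configurations rapidly force four or more of the special points onto a common line, contradicting the Del Pezzo non-collinearity; for three distinct lines, the tangent conditions at the $p_j$ of multiplicity $1$ further force $p$ to lie on some $\overline{p_0p_j}$, violating~(I). For $c = q + \ell$, smoothness of $q$ gives $p \in q \cap \ell$, and since at most one $p_j$ can lie on $\ell$ but not on $q$ (otherwise $\ell$ would be two distinct lines of the form $\overline{p_0 p_j}$), at least two of $p_1, p_2, p_3$ lie on $q$. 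If all three lie on $q$, either the three tangent conditions over-determine $q$ (no solution by direct linear algebra) or $q$ coincides with one of the conics $c_i$ of Lemma~\ref{le!conic}, whereby $p \in q = c_i$ contradicts~(II); if exactly two lie on $q$, then $\ell = \overline{p_0p_k}$ for the remaining $p_k$, and $p \in \ell$ contradicts~(I).

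The main obstacle is the careful bookkeeping in the conic-plus-line sub-case, where both hypotheses on $p$ are used in essential ways: condition~(II) excludes $p$ from the conics $c_i$, and condition~(I) excludes $p$ from the nine forbidden lines. The irreducible case, by contrast, reduces to a single determinant computation that neatly encodes exactly the six line-exclusions of~(I).
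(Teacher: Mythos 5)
Your argument is correct, and its computational core coincides with the paper's: both parametrize the cubics through $p_0, p_1, p_1', \dots, p_3'$ as the same $3$-dimensional space (your basis differs from the paper's $Ax_1^2(x_2-x_3)+Bx_2^2(x_3-x_1)+Cx_3^2(x_2-x_1)$ only by signs), impose the double point at $p$ as a $3\times 3$ linear system, and conclude from the nonvanishing of a determinant of the form $\mathrm{const}\cdot\alpha\beta\gamma(\alpha-\beta)(\beta-\gamma)(\alpha-\gamma)$, which is exactly condition (I); your determinant $6\alpha\beta\gamma(\alpha-\beta)(\beta-\gamma)(\alpha-\gamma)$ checks out for your basis. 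The genuine difference is that you only apply this computation to \emph{irreducible} cubics and then dispose of reducible and non-reduced cubics by a separate geometric case analysis (three lines; conic plus line, using Lemma~\ref{le!conic} and conditions (I), (II)). That analysis is sound but unnecessary: the membership conditions coming from the total transforms $E_j, E_j'$ are \emph{linear} conditions on the cubic form --- ``no $x_j^3$ term'' together with ``coefficient of $x_j^2x_{j+1}$ opposite to that of $x_j^2x_{j+2}$'' --- and these hold for every effective member of $|-K_W-E|$, reducible or not (if $\mathrm{mult}_{p_j}c\ge 2$ both coefficients vanish and the relation holds trivially). The paper exploits this to run the determinant argument uniformly on the whole linear system in one step, with no case division. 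What your longer route buys is some geometric insight into why conditions (I) and (II) are each needed (the conics $c_i$ and the forbidden lines reappear as actual components of degenerate cubics), but if you streamline, observe that your $3$-dimensional space already contains all candidates and the single determinant finishes the proof.
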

          \begin{proof}Assume by contradiction that $|-K_W-E| \not = \emptyset.$
          Then an element in $|-K_W-E|$ corresponds to a cubic curve $c$ on $\mathbb{P}^2$
          passing $p_0, p_1, p_2, p_3, p_1', p_2', p_3'$ and having a singularity at $p.$
          Let $F(x_1,x_2,x_3)$ be the equation of $c.$
          Since $c$ passes through $p_1, p_2, p_3,$ $F$ has no terms $x_1^3, x_2^3, x_3^3.$
          Since $\overline{p_0p_j}:x_{j+1}=x_{j+2} $ is the tangent line to $c$ at the point $p_j,$
          the coefficient of the term $x_j^2x_{j+1}$ is the opposite of that of the term $x_j^2x_{j+2}.$
          So we may assume that
          $$F(x_1,x_2,x_3)=Ax_1^2(x_2-x_3)+Bx_2^2(x_3-x_1)+Cx_3^2(x_2-x_1)+Dx_1x_2x_3.$$
          Since $c$ contains $p_0=(1:1:1),$ $D=0.$

          Assume that $p=(1:\alpha:\beta),$ where $\alpha \not=0, 1,$ and $\beta \not= 0, 1,$
          and $\alpha \not=\beta$ (cf.~(I)).
          The singularity $p$ of $c$ imposes the following conditions:
          \begin{align*}
          (\alpha-\beta)A+\alpha^2(\beta-1)B+\beta^2(\alpha-1)C=0,\\
          A+2\alpha(\beta-1)B+\beta^2C=0,\\
          -A+\alpha^2B+2\beta(\alpha-1)C=0.
          \end{align*}
          Since the coefficients matrix has determinant $2\alpha \beta(\alpha-1)(\beta-1)(\alpha-\beta),$
          which is nonzero, $A=B=C=0.$
          Hence $|-K_W-E| = \emptyset.$
          \end{proof}

    \item The $(-2)$-curves $C_1, C_1', C_2, C_2', C_3, C_3'$ are disjoint.
          Let $\eta \colon W \rightarrow \Sigma$ be the morphism contracting
          these $(-2)$-curves.
          Then $\Sigma$ has six nodes and $-K_\Sigma$ is ample.
    \item Denote by $\Gamma$ the strict transform of the line passing $p_0$ and $p,$
          i.e., $\Gamma \equiv L-E_0-E.$
          $\Gamma$ is a $(-1)$-curve and
          $\Gamma+E$ is  a member in a base-point-free pencil of rational curves $|F|,$
          where $F \equiv L-E_0.$
          $|F|$ corresponds to the pencil of lines on $\mathbb{P}^2$ passing through the point $p_0.$
          The morphism $g \colon W \rightarrow \mathbb{P}^1$ induced by $|F|$
          has exactly four singular fibers:
          $$\xymatrix @M=0pt@R=2pt{
          C_j              &   E_j'            &  C_j'   & \Gamma & E  \\
          \circ \ar@{-}[r] &  \circ \ar@{-}[r] &  \circ  & \circ \ar@{-}[r] &  \circ \\
          1&2&1&1&1}$$
          for $j=1, 2, 3.$
\end{enumerate}
From two $(-1)$-curves $\Gamma$ and $E,$
we will find two more $(-1)$-curves.
For this purpose, we need some properties of the linear system $|-2K_W|.$
\begin{proposition}[cf.~\cite{Topics}*{Theorem~8.3.2}]\label{prop!antibicanonical}
$h^0(W, \O_W(-2K_W))=4$ and $|-2K_W|$ defines a regular map
$\phi \colon W \rightarrow \mathbb{P}^3.$
It factors as a birational morphism $\eta \colon W \rightarrow \Sigma$
contracting exactly the six $(-2)$-curves,
and a finite morphism  $q \colon \Sigma \rightarrow Q$ of degree $2,$
where $Q$ is a quadric cone.
\end{proposition}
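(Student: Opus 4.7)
The plan is to follow the standard route for the bianticanonical system on a weak Del Pezzo surface of degree one. First, I would compute $h^0(-2K_W) = 4$ by Riemann--Roch and vanishing: Riemann--Roch gives $\chi(-2K_W) = 1 + \tfrac{1}{2}(-2K_W)(-3K_W) = 1 + 3K_W^2 = 4$, and since $-3K_W = (-2K_W) - K_W$ is nef and big (by property (1)), Kawamata--Viehweg vanishing yields $h^i(W, -2K_W) = 0$ for $i > 0$.

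Next, I would establish base-point-freeness of $|-2K_W|$ by Reider's theorem applied with $L = -3K_W$, which is nef with $L^2 = 9 \geq 5$. A base point would force an effective divisor $E$ with $(L \cdot E, E^2) \in \{(0, -1), (1, 0)\}$: the first case yields $p_a(E) = 1/2$ by adjunction, and the second forces $K_W \cdot E = -1/3 \notin \mathbb{Z}$, both absurd. Hence $\phi : W \to \mathbb{P}^3$ is a morphism. An irreducible curve $D \subset W$ is contracted by $\phi$ iff $-K_W \cdot D = 0$; since $-K_W$ is big and nef, Hodge index forces $D^2 < 0$ for such $D$, and then $K_W \cdot D = 0$ together with adjunction yields $D^2 = -2$ and $D \cong \mathbb{P}^1$. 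Thus the contracted curves are precisely the six $(-2)$-curves listed in (2), and $\phi$ factors as $\phi = q \circ \eta$ with $\eta$ as in (3) and $q : \Sigma \to \phi(W) \subset \mathbb{P}^3$ finite.

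Finally, I would identify $\phi(W)$ as a quadric cone. Riemann--Roch and vanishing also give $h^0(-K_W) = 2$, and the multiplication map $\mathrm{Sym}^2 H^0(-K_W) \to H^0(-2K_W)$ is injective: for a basis $s_1, s_2$ of $H^0(-K_W)$ the ratio $s_2/s_1$ is a non-constant rational function realizing the pencil $|-K_W|$, so no nontrivial quadratic identity in $s_1, s_2$ can hold. Its 3-dimensional image $V \subset H^0(-2K_W)$ induces a linear projection $\pi : \mathbb{P}^3 \dashrightarrow \mathbb{P}^2$ from a point $*$ such that $\pi \circ \phi$ factors as the rational map $W \dashrightarrow \mathbb{P}^1$ defined by $|-K_W|$ followed by the degree-two Veronese $\mathbb{P}^1 \hookrightarrow \mathbb{P}^2$. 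Its image is therefore a smooth conic $Q_0$, forcing $\phi(W)$ to lie in the cone $Q$ over $Q_0$ with vertex $*$, a quadric cone in $\mathbb{P}^3$; by irreducibility and dimension $\phi(W) = Q$, and $\deg(q) \cdot \deg(Q) = (-2K_W)^2 = 4$ then yields $\deg(q) = 2$.

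The step I expect to require the most care is establishing base-point-freeness. Reider disposes of it by a short numerical check, but a direct argument---blowing up the base point of $|-K_W|$ to obtain a genus-one fibration on $W$ and constructing an explicit section of $-2K_W$ from two fibers---would also work, essentially along the lines of Dolgachev \cite{Topics}*{Theorem~8.3.2}.
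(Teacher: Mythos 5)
Your proposal is correct, and there is nothing in the paper to compare it against step by step: the paper offers no proof of this proposition at all, simply deferring to Dolgachev, \emph{Topics in Classical Algebraic Geometry}, Theorem~8.3.2. What you have written is essentially the standard proof of that cited theorem specialized to degree one, and each step checks out: Riemann--Roch plus Kawamata--Viehweg gives $h^0(-2K_W)=4$ and $h^0(-K_W)=2$; the Reider argument with $L=-3K_W$ is valid ($L$ nef, $L^2=9\ge 5$, and both exceptional cases are killed by integrality of $K_W\cdot E$ and of $p_a(E)$); the identification of the contracted curves via the Hodge index theorem and adjunction matches the classification of $(-2)$-curves in property~(2) of Section~2; and the cone argument via the injection $\mathrm{Sym}^2 H^0(-K_W)\hookrightarrow H^0(-2K_W)$ together with $\deg(q)\cdot\deg(Q)=(-2K_W)^2=4$ correctly yields $\deg(q)=2$. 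The only payoff of your route versus the paper's is self-containedness: you make explicit the facts actually used later (base-point-freeness of $|-2K_W|$, that exactly the six $(-2)$-curves are contracted, and the degree of $q$), at the cost of invoking Reider and Kawamata--Viehweg where Dolgachev's treatment is more elementary.
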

See \cite{Topics}*{Theorem~8.3.2} for a general statement on weak Del Pezzo surfaces of any degree and for a proof.

\begin{proposition}\label{prop!GammaE}
\begin{enumerate}
    \item The linear system of $|-2K_W-\Gamma|$ consists of a $(-1)$-curve.
          Denote this $(-1)$-curve by $\B_2.$
          Then $\B_2\Gamma=3$ and $\B_2E=1.$
    \item The linear system of $|-2K_W-E|$ consists of a $(-1)$-curve.
          Denote this $(-1)$-curve by $\B_3.$
          Then $\B_3\Gamma=1$ and $\B_3E=3.$
    \item $\Gamma+E+\B_2+\B_3$ has only nodes.
\end{enumerate}
\end{proposition}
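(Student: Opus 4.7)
For parts (1) and (2), which are symmetric under swapping $\Gamma$ and $E$, the plan is to prove (1) in detail and handle (2) by the identical argument. First, I would show $h^0(-2K_W-\Gamma)\geq 1$ via the restriction sequence
\[
0 \to \O_W(-2K_W-\Gamma) \to \O_W(-2K_W) \to \O_\Gamma(-2K_W|_\Gamma) \to 0,
\]
using $h^0(-2K_W)=4$ from Proposition~\ref{prop!antibicanonical} together with $h^0(\O_\Gamma(2))=3$ (since $\Gamma\cong\mathbb{P}^1$ and $(-2K_W)\cdot\Gamma=2$). For an effective $D$ in the class, I would compute $D^2=-1$ and $D\cdot K_W=-1$, then exploit the nefness of $-K_W$ together with the fact that $-K_W\cdot C=0$ forces $C$ to be one of the six $(-2)$-curves listed in~(2.1) to decompose $D = D_0 + \sum(m_j C_j + m_j' C_j')$, where $D_0$ is an irreducible component of multiplicity one with $-K_W\cdot D_0=1$. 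Because $\Gamma\cdot C_j=\Gamma\cdot C_j'=0$, one gets $D\cdot C_j=D\cdot C_j'=0$, which forces $D_0\cdot C_j=2m_j$ and $D_0\cdot C_j'=2m_j'$. Expanding $D^2$ using the pairwise disjointness of the six $(-2)$-curves yields $D_0^2+2\sum(m_j^2+m_j'^2)=-1$; combined with $D_0^2\geq -1$ (adjunction, since $D_0$ is irreducible with $-K_W\cdot D_0=1$) this forces $D_0^2=-1$ and all $m_j=m_j'=0$. Uniqueness follows because two distinct $(-1)$-curves in the same class would intersect non-negatively, contradicting $D^2=-1$. Naming this unique $(-1)$-curve $\B_2$, the claimed numerical identities $\B_2\cdot\Gamma=3$ and $\B_2\cdot E=1$ are immediate bilinear computations.

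For part~(3), each of $\Gamma,E,\B_2,\B_3$ is smooth, being a $(-1)$-curve. A direct computation gives $\B_2\cdot\B_3=1$, and together with $\Gamma\cdot E=\Gamma\cdot\B_3=E\cdot\B_2=1$ this shows four of the six pairs meet in a single transverse point. For the remaining pairs $\Gamma\cdot\B_2=3$ and $E\cdot\B_3=3$, my plan is to project to $\mathbb{P}^2$ via $\sigma$: the image $\sigma(\Gamma)$ is the line $\overline{p_0p}$ and $\sigma(\B_2)$ is an irreducible plane quintic with class $5L-E_0-2\sum(E_j+E_j')-E$, so by B\'ezout the plane intersection has total multiplicity $5$, comprising $p_0$, $p$, and three residual points. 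I would then show that these three residual points are distinct and that no tangency of $\sigma(\B_2)$ with $\overline{p_0p}$ occurs at $p_0$, $p$, or at the residuals, so that the three intersections lift on $W$ to three distinct transverse points. An analogous local analysis at the exceptional divisor $E$ (where $\B_3$ exhibits a triple-point-type behaviour) handles $E\cap\B_3$. Finally, ruling out triple intersections among the four curves reduces to a case-by-case check at each candidate shared point.

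The main obstacle is excluding all the tangency and coincidence configurations in the intersection-number-$3$ pairs. My approach is by contradiction: any such tangency or coincidence would impose a nontrivial algebraic relation on the coordinates of $p$, which I expect to exclude using conditions~(I) and~(II), the irreducibility of $\sigma(\B_2)$ and $\sigma(\B_3)$ established above, and, if needed, an explicit coordinate computation in the spirit of the proof of Lemma~\ref{le!empty}. The triple-intersection check proceeds analogously. No further conceptual machinery is required; the intersection-theoretic bookkeeping is the genuine content of part~(3).
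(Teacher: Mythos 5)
Your parts (1) and (2) are correct, and your irreducibility argument is a genuine (and cleaner) variant of the paper's. After the same restriction sequence, the paper first proves $|-K_W-\Gamma|=\emptyset$ and $|-K_W-E|=\emptyset$ (the latter is Lemma~\ref{le!empty}) and then runs an index-theorem case analysis on a decomposition $A_1+A_2$; your identity $D_0^2+2\sum(m_j^2+m_j'^2)=-1$ combined with $D_0^2\ge -1$ kills both the $D_0\equiv -K_W$ case and the $(-2)$-curve tail in one stroke, so for this proposition you never need the emptiness lemmas at all. The decomposition itself is justified exactly as you say: $-K_W$ nef with $-K_W\cdot D=1$ forces a unique multiplicity-one component with $-K_W\cdot D_0=1$, and every other component is one of the six disjoint $(-2)$-curves.

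Part (3) is where there is a genuine gap. The entire content of (3) is that $\Gamma\cap\B_2$ consists of three distinct transverse points, that $E\cap\B_3$ does likewise, and that no three of the four curves meet; the intersection-number bookkeeping you carry out settles only the multiplicity-one pairs. For the hard pairs you propose to compute the plane quintic $\sigma(\B_2)$ and the sextic $\sigma(\B_3)$ explicitly and rule out tangencies and coincidences because they ``would impose a nontrivial algebraic relation on the coordinates of $p$, which I expect to exclude using conditions (I) and (II).'' That expectation is exactly what has to be proved and is not justified: a tangency locus for a quintic varying with $p$ is a priori a new divisor in the parameter plane, and there is no reason it should lie inside the six lines and three conics excluded by (I) and (II); verifying this (equivalently, that the locus is empty) requires writing down the quintic through seven of the eight assigned base points as a function of $p$, a far heavier computation than anything else in the paper, and you do not carry it out. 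The paper sidesteps all of this globally: $|\B_2+\B_3|$ is a genus-$0$ pencil whose other singular fibers are three chains $(-2)+2(-1)+(-2)$, so Riemann--Hurwitz applied to the degree-$4$ map $h|_\Gamma$ forces all six ramification points to lie over those three fibers, whence $\Gamma$ meets the fiber $\B_2+\B_3$ in four distinct reduced points (and in particular misses $\B_2\cap\B_3$); the symmetric statement for $\B_2$ and $\B_3$ against the fiber $\Gamma+E$ of $|L-E_0|$ is proved the same way. Some argument of this kind --- or an actually executed coordinate computation --- is needed before your part (3) can be considered a proof.
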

\proof
\begin{enumerate}
\item  We have an exact sequence
       $$0 \rightarrow \O_W(-2K_W-\Gamma) \rightarrow \O_W(-2K_W) \rightarrow \O_{\Gamma}(-2K_W) \rightarrow 0.$$
       Since $-2K_W\Gamma=-2,$ by Proposition~\ref{prop!antibicanonical},
       $h^0(W, \O_W(-2K_W-\Gamma)) \ge 1.$

       Since $(-2K_W-\Gamma)^2=-1$ and $K_W(-2K_W-\Gamma)=-1,$
       it suffices to prove that any curve in $|-2K_W-\Gamma|$ is irreducible.

       First we prove that $|-K_W-\Gamma|$ is empty.
       Since $-K_W-\Gamma \equiv 2L-E_1-E_1'-E_2-E_2'-E_3-E_3',$
       this follows from Lemma~\ref{le!conic}.

       Assume that $-2K_W-\Gamma \equiv A_1+A_2,$
       where $A_1$ is an irreducible curve with $-K_WA_1=1,$
       and $Supp(A_2)$ is contained in the union of the $(-2)$-curves.
       By the algebraic index theorem, $A_1^2 \le 1.$
       If $A_1^2=1,$ then $A_1\equiv -K_W.$
       But then $-K_W-\Gamma \equiv A_2.$
       This contradicts that $|-K_W-\Gamma| =\emptyset.$
       So $A_1^2 <0$ and thus $A_1$ is a $(-1)$-curve.
       Since $Supp(A_2)$ is contained in union of the $(-2)$-curves,
       $A_1A_2+A_2^2=(-2K_W-\Gamma)A_2=0.$
       So $A_1A_2=-A_2^2$ is an even integer.
       Since $-K_W(A_1+A_2)=1,$ by the algebraic index theorem, $(A_1+A_2)^2 \le 1,$ i.e., $A_1A_2-1 \le 1.$
       If the equality holds, then $-K_W \equiv A_1+A_2.$
       Then $-K_W-\Gamma \equiv 0.$
       This contradicts that $|-K_W-\Gamma|=\emptyset.$
       It follows that $A_1A_2 =0.$
       So $A_2^2=0$ and thus $A_2=0.$

       Hence $|-2K_W-\Gamma|$ consists of a $(-1)$-curve $\B_2.$
       Moreover, $\B_2\Gamma=(-2K_W-\Gamma)\Gamma=3$ and $\B_2E=(-2K_W-\Gamma)E=1.$

\item  The proof is similar to (1).
       The key point is to prove that $|-K_W-E|=\emptyset.$
       This is true by Lemma~\ref{le!empty}.

\item  Recall that $\Gamma+E$ are disjoint from the $(-2)$-curves,
       since they are in different fibers of $g.$
       It follows that $\B_2 \equiv -2K_W-\Gamma$ and $\B_3 \equiv -2K_W-E$ are also disjoint from the
       $(-2)$-curves.
       Note that $\B_2\B_3=(-2K_W-\Gamma)(-2K_W-E)=1,$
       $\B_2E=\B_3\Gamma=\Gamma E=1.$
       It suffices to prove that
       \begin{enumerate}
       \item $\Gamma$ (respectively $E$) intersects $\B_2+\B_3$ transversely.
       \item $\B_2$ (respectively $\B_3$) intersects $\Gamma+E$ transversely.
       \end{enumerate}

       For (a), let $M:=\B_2+\B_3.$
       Then $|M|$ induces a genus $0$ fibration $h \colon W \rightarrow \mathbb{P}^1.$
       Since $MC_j=MC_j'=0$ for $j=1, 2, 3,$
       the six $(-2)$-curves are contained in the singular fibers of $h.$
       We claim that $h$ has exactly four singular fibers: $\B_2+\B_3$ and $M_j (j=1,2,3):$
       $$\xymatrix @M=0pt@R=2pt{
          (-2)             &  \Theta_j           & (-2)\\
          \circ \ar@{-}[r] &  \circ \ar@{-}[r] &  \circ  \\
          1&2&1}$$
       where the $(-2)$-curves are $C_1, \ldots, C_3',$
       and $\Theta_j$ is a $(-1)$-curve for $j=1, 2, 3.$

       Actually, since $-K_W$ is nef, for any irreducible component $A$ in a singular fiber,
       $A$ is either a $(-2)$-curve or $A$ is $(-1)$-curve.
       Since $-K_WM=-K_W(\B_2+\B_3)=2,$ any singular fiber contains either one $(-1)$-curve with multiplicity $2,$
       or two $(-1)$-curves with each multiplicity $1.$
       Since all $(-2)$-curves of $W$ are disjoint, any singular fiber has one of the following possible types:
       $$\xymatrix @M=0pt@R=2pt{
          (-2)             &  (-1)          & (-2)       &(-1) & (-2)& (-1) & (-1) & (-1)\\
          \circ \ar@{-}[r] &  \circ \ar@{-}[r] &  \circ  &\circ\ar@{-}[r] & \circ\ar@{-}[r] & \circ\ &
          \circ\ar@{-}[r] & \circ\\
          1&2&1&1&1&1&1&1}$$
       Each fiber of the first two types contributes $2$ to the Picard number $\rho(W).$
       Note that $W$ has six $(-2)$-curves $C_1, \ldots,C_3'$ and $\rho(W)=9.$
       By concerning how the $(-2)$-curves distribute to the singular fibers,
       we see that except the singular fiber $\B_2+\B_3,$
       any other singular fiber is of the first type.
       Our claim is proved.

       Since $M\Gamma=(-4K_W-\Gamma-E)\Gamma=4,$
       $h|_\Gamma \colon \Gamma \rightarrow \mathbb{P}^1$ is of degree $4.$
       Denote by $R$ the ramification divisor of $h|_\Gamma.$
       Since $\Gamma$ is disjoint from $C_j$ and $C_j',$
       $\Gamma \Theta_j=\frac 12 \Gamma M=2.$
       Thus $h|_\Gamma$ has ramification points on the singular fibers $M_1, M_2,M_3,$
       and $\deg R \ge 2 \times 3=6.$
       The Riemann-Hurwitz formula shows that $h|_\Gamma$ does not have any other ramification points
       than those on $M_1, M_2, M_3.$
       In particular, $\Gamma$ intersects the fiber $\B_2+\B_3$ transversely.

       Similar argument shows that $E$ intersects $\B_2+\B_3$ transversely.

       For (b), we use another fibration $g \colon W \rightarrow \mathbb{P}^1.$
       We have seen the singular fibers of $g$ in Section~2.
       Note that $\B_2F=\B_2(\Gamma+E)=4$ and $\B_3F=\B_3(\Gamma+E)=4.$
       Similar argument as the proof of (a) shows that $\B_2$ (respectively $\B_3$) intersects $\Gamma+E$ transversely. \qed
\end{enumerate}

\section{Construction of surfaces of general type}
In this section,
we construct a family of surfaces of general type as finite $(\mathbb{Z}/2\mathbb{Z})^2$-covers of $W.$
First, we define three effective divisors on $W$
\begin{align}
    \Delta_1:&=F_b+\Gamma+(C_1+C_1'+C_2+C_2') \equiv 4L-4E_0-2E_1'-2E_2'-E,\nonumber\\
    \Delta_2:&=\B_2+(C_3+C_3') \equiv  -2K_W-2E_3'+E, \label{eq:newdata1}\\ 
    \Delta_3:&=\B_3 \equiv -2K_W-E. \nonumber 
\end{align}
Here we require that
\begin{enumerate}
\renewcommand{\labelenumi}{(\Alph{enumi})}
  \item $F_b$ is a smooth fiber of $g$ (cf.~Section~2, property (4) of $W$).
  \item The divisor $\Delta:=\Delta_1+\Delta_2+\Delta_3$ has only nodes.

        By Proposition~\ref{prop!GammaE}, $\B_2+\B_3+\Gamma$ has only nodes,
        and $F_b, \Gamma, \B_2, \B_3$ are disjoint from the $(-2)$-curves $C_1, \ldots, C_3'.$
        $(B)$ is equivalent to that $F_b$ intersects $\B_2$ and $\B_3$ transversely,
        and does not pass the intersection point of $\B_2$ and $\B_3.$
        By Bertini theorem, this is the case for a general fiber.
\end{enumerate}

We also define three divisors
\begin{equation}\label{eq:newdata2}
    \begin{aligned}
     \L_1&=-2K_W-E_3',\\
     \L_2&=-K_W+(2L-2E_0-E_1'-E_2'-E),\\
     \L_3&=-K_W+(2L-2E_0-E_1'-E_2'-E_3'). 
    \end{aligned}
\end{equation}
It follows that $2\L_i \equiv \Delta_{i+1}+\Delta_{i+2},\ \ \L_i+\Delta_i \equiv \L_{i+1}+\L_{i+2}$
for all $i=1,2,3.$

Denote by $g_1,g_2,g_3$ the nonzero elements of $G:= (\mathbb{Z}/2\mathbb{Z})^2$
and by $\chi_i \in G^*$ the nontrivial character orthogonal to $g_i;$
by \cite{bidoublecover}*{Section~1} or \cite{singularbidouble}*{Theorem 2},
the data \eqref{eq:newdata1} and \eqref{eq:newdata2}
define a finite $G$-cover $\tpi \colon V \rightarrow W.$

By \cite{singularbidouble}*{Theorem~2}, conditions $(A)$ and $(B)$ imply that $V$ is smooth.
By the formulae in \cite{singularbidouble}*{Section~2},
\begin{align}
2K_V &\equiv \tpi^*(2K_W+\Delta) \equiv \tpi^*(-2K_W+\Gamma+\sum\nolimits_{j=1}^{3}(C_j+C_j')),\label{eq:newbican}\\
p_g(V) &=p_g(W)+\sum\nolimits_{i=1}^{3}h^0(W, \O_W(K_W+\L_i)).\label{eq:newpg}
\end{align}

Note that each $C_j$ or $C_j'$ (for $j=1,2,3$) is a connected component of $\Delta.$
The (set theoretic) inverse image $\tpi^{-1}C_j$ or $\tpi^{-1}C_j'$ is a disjoint union of two $(-1)$-curves.
Let $\e \colon V  \rightarrow S$ be the blowdown of these twelve $(-1)$-curves.
From the construction, there is a finite $G$-cover $\pi \colon S \rightarrow \Sigma$ such that the following diagram
commutes:
\begin{align}\label{diag}
\xymatrix{
    V \ar^{\e}"1,2" \ar_{\tpi}"2,1" & S \ar^{\pi}"2,2" \\
    W \ar^{\eta}"2,2" & \Sigma
}
\end{align}
The discussion above shows that
\begin{equation}
2K_S \equiv \tpi^*(-2K_\Sigma+\gamma), \label{eq:newbicanS}
\end{equation}
where $\gamma=\eta(\Gamma)$ is a $(-1)$-curve contained in the smooth part of $\Sigma.$

\begin{theorem}\label{thm!newsurface}
$S$ is a smooth minimal surface of general type with $K_S^2=7$ and $p_g(S)=0.$
Moreover, $K_S$ is ample.
\end{theorem}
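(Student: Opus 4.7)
The plan is to verify, in turn, that $S$ is smooth, that $K_S^2=7$, that $p_g(S)=0$, and that $K_S$ is ample, relying on the bidouble cover formulae and the geometry of $W$ from Section~2.

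Conditions $(A)$ and $(B)$ ensure by \cite{singularbidouble}*{Theorem~2} that $V$ is smooth. Since the six $(-2)$-curves $C_j,C_j'$ are pairwise disjoint connected components of $\Delta$, each yields a disjoint pair of $(-1)$-curves on $V$, so $\e$ contracts twelve pairwise disjoint $(-1)$-curves to the smooth surface $S$, giving $K_S^2=K_V^2+12$. To compute $K_V^2$ I would expand the square of the class $-2K_W+\Gamma+\sum_j(C_j+C_j')$ from \eqref{eq:newbican}, using $K_W^2=1$, $\Gamma^2=-1$, $K_W\Gamma=-1$, $K_WC_j=K_WC_j'=0$, the disjointness of $\Gamma$ from the $(-2)$-curves (they lie in different fibers of $g$), and $(C_j+C_j')^2=-4$; the terms sum to $K_V^2=4+4-1-12=-5$, hence $K_S^2=7$.

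For $p_g(S)=p_g(V)$, formula \eqref{eq:newpg} together with $p_g(W)=0$ reduces the task to the vanishings $h^0(W,\O_W(K_W+\L_i))=0$ for $i=1,2,3$. A short computation of classes gives
\begin{align*}
K_W+\L_1 &\equiv -K_W-E_3',\\
K_W+\L_2 &\equiv 2L-2E_0-E_1'-E_2'-E,\\
K_W+\L_3 &\equiv 2L-2E_0-E_1'-E_2'-E_3'.
\end{align*}
For $\L_1$: since $(-K_W)(K_W+\L_1)=0$ and $-K_W$ is nef, any effective representative is a nonnegative integer combination of the six $(-2)$-curves $C_j,C_j'$; comparing the coefficient of $E$ (identically $0$ on any such combination, but $-1$ in $-K_W-E_3'$) rules this out. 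For $\L_2$: the intersection $(K_W+\L_2)\cdot\Gamma=-1$ forces $\Gamma$ to be a fixed component and reduces $|K_W+\L_2|$ to $|L-E_0-E_1'-E_2'|$. For $\L_3$: I would strip off $C_3$, $C_3'$, and $E_3'$ successively (each via a negative intersection) to reach the same residual class $L-E_0-E_1'-E_2'$. Emptiness of this residual class again follows from the $(-2)$-curve reduction, since the integer system $\sum a_j=1$, $-a_1+b_1=0$, $-a_1-b_1=-1$ has no nonnegative solution.

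For ampleness of $K_S$, since $\pi$ is finite, \eqref{eq:newbicanS} reduces the task to proving that $-2K_\Sigma+\gamma$ is ample on $\Sigma$. I apply Nakai--Moishezon: using crepancy $\eta^*K_\Sigma=K_W$ and the fact that $\gamma=\eta(\Gamma)$ is disjoint from the exceptional $(-2)$-curves, one has $(-2K_\Sigma+\gamma)^2=4K_W^2-4K_W\Gamma+\Gamma^2=4+4-1=7>0$; for every irreducible $C\subset\Sigma$ with $C\ne\gamma$, ampleness of $-K_\Sigma$ gives $(-2K_\Sigma)C>0$ and $\gamma\cdot C\ge 0$, so the sum is positive; and the remaining case $C=\gamma$ yields $(-2K_\Sigma)\gamma+\gamma^2=2-1=1>0$. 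Hence $-2K_\Sigma+\gamma$ is ample, so $2K_S=\pi^*(-2K_\Sigma+\gamma)$ is ample, whence $K_S$ is ample.

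The main obstacle is the three vanishings $h^0(K_W+\L_i)=0$: although nefness of $-K_W$ furnishes a clean reduction to combinations of $(-2)$-curves, the cases $\L_2$ and $\L_3$ demand careful bookkeeping of successive fixed components before that reduction becomes applicable.
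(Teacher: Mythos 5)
Your proposal is correct, and it follows the same overall skeleton as the paper (smoothness from conditions (A) and (B) via \cite{singularbidouble}, $K_S^2$ from the bicanonical formula, $p_g$ via the three vanishings $h^0(W,\O_W(K_W+\L_i))=0$, ampleness reduced through the finite morphism $\pi$), but several sub-arguments are genuinely different. For $K_S^2$ you compute $K_V^2=-5$ on $V$ and add $12$, where the paper works directly on $\Sigma$ with $(-2K_\Sigma+\gamma)^2=7$; these are equivalent. For the vanishings, the paper strips off fixed components until it reaches a manifestly non-effective class such as $-E$ or $-E_3'$, and for $\L_1$ it invokes Lemma~\ref{le!conic} and condition (II) directly; you instead reduce (for $\L_1$, and for the common residual class $L-E_0-E_1'-E_2'$ in the other two cases) to the observation that an effective $D$ with $-K_W\cdot D=0$ must be a nonnegative integral combination of the six $(-2)$-curves, and then compare coefficients in the free basis of $Pic(W)$. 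This is a clean alternative that pushes all dependence on the genericity of $p$ into the classification of the $(-2)$-curves from Section~2; the only wording to fix is that the linear system for $L-E_0-E_1'-E_2'$ has no nonnegative \emph{integer} solution (it forces $2a_1=1$, so the nonnegative rational solution $a_1=b_1=\tfrac12$ exists and it is integrality of multiplicities that kills it). For ampleness, the paper shows on $W$ that any irreducible $C$ with $(-2K_W+\Gamma)C\le 0$ is one of the contracted $(-2)$-curves, using effectivity of $-2K_W+\Gamma$ and the index theorem; you instead apply Nakai--Moishezon on $\Sigma$ using the ampleness of $-K_\Sigma$ recorded in Section~2, property (3), which is shorter and equally legitimate since $-2K_\Sigma+\gamma$ is Cartier ($\gamma$ avoids the nodes). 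Only a one-line conclusion is missing: ampleness of $K_S$ together with $K_S^2=7>0$ gives minimality and general type.
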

\proof
   By \eqref{eq:newbicanS},
         $K_S^2=\frac 14 4(-2K_\Sigma+\gamma)^2=7.$

   By \eqref{eq:newpg} and $p_g(W)=0,$
   to show that $p_g(S)=p_g(V)=0,$ it suffices to show that $h^0(W, \O_W(K_W+\L_i))=0$ for $i=1, 2, 3.$
   By \eqref{eq:newdata2},
         \begin{equation}\label{eq:newKL}
         \begin{aligned}
         K_W+\L_1&\equiv -K_W-E_3',\\
         K_W+\L_2&\equiv 2L-2E_0-E_1'-E_2'-E,\\
         K_W+\L_3&\equiv 2L-2E_0-E_1'-E_2'-E_3'.
         \end{aligned}
         \end{equation}
   Recall the divisor classes of the $(-2)$-curves \eqref{eq:(-2)curves} for
   the calculation of intersection numbers.
   \begin{enumerate}
        \item   Assume by contradiction that $|K_W+\L_1| \not= \emptyset$
                and let $D \in |K_W+\L_1|.$
                Since $DC_3=DC_3'=-1,$  $D \ge C_3+C_3'.$
                Let $D':=D-C_3-C_3'.$
                Then $D'$ is effective and $D \equiv 2L-E_1-E_1'-E_2-E_2'-E_3-E.$
                But this contradicts Lemma~\ref{le!conic} and the condition (II).
                So $|K_W+\L_1|=\emptyset.$
        \item   Assume by contradiction that $|K_W+\L_2| \not= \emptyset$
                and let $D \in |K_W+\L_2|.$
                Since $DC_1=DC_1'=-1,$ $D \ge C_1+C_1'.$
                Then $(D-C_1-C_1')E_1'=-1$ and $D-C_1-C_1'\ge E_1'.$
                It follows that $D':=D-C_1-C_1'-E_1'$ is effective and $D' \equiv L-E_0-E_2'-E.$
                Similarly, $D'\ge C_2+C_2'+E_2'$ and $D'':=D'-C_2-C_2'-E_2'$ is effective.
                But $D'' \equiv -E.$
                This is impossible.
                So $|K_W+\L_2|=\emptyset.$

        \item   Assume by contradiction that $|K_W+\L_3| \not= \emptyset$
                and let $D \in |K_W+\L_3|.$
                Similar argument shows that $D \ge (C_1+C_1'+E_1')+(C_2+C_2'+E_2')$
                Thus $D':=D-(C_1+C_1'+E_1')-(C_2+C_2'+E_2')$ is effective.
                But $D' \equiv -E_3'.$
                This is impossible.
                So $|K_W+\L_3|=\emptyset.$
   \end{enumerate}
   Hence $p_g(S)=0.$

   Since $\pi$ is a finite morphism, by \eqref{eq:newbicanS},
   to prove that $K_S$ is ample,
   it suffices to show that $-2K_\Sigma+\gamma$ is ample,
   i.e., it suffices to show that if $C$ is an irreducible curve on $W$ such that $(-2K_W+\Gamma)C \le 0,$
   then $C$ is one of the six $(-2)$-curves $C_1, \ldots, C_3'.$

   Actually, since $-2K_W+\Gamma$ is effective and $(-2K_W+\Gamma)^2=7,$ $C^2 <0.$
   Since $-K_W$ is nef, $\Gamma.C \le 2K_W.C \le 0.$
   If $\Gamma.C <0,$ then $C=\Gamma$ and $C$ is a $(-1)$-curve.
   But then $-1=\Gamma.C \le 2K_WC=-2.$
   This gives a contradiction.
   Thus $\Gamma.C=0$ and $K_WC=0.$
   So $C$ is one of the $(-2)$-curves $C_1, \ldots,C_3'.$

   Hence  $K_S$ is ample and thus $S$ is minimal and of general type.\qed
\bigskip

We have constructed a family of surfaces with a $G \cong (\mathbb{Z}/2\mathbb{Z})^2$-action,
parameterized by a $3$-dimensional open subset $\{(p, F_b)| p \in \mathbb{P}^2\ \text{satisfying conditions (I) and (II)}, F_b\in |F|\ \text{satisfying conditions (A) and (B)}\}$
of $\mathbb{P}^2 \times \mathbb{P}^1.$
Here comes a natural question:
whether the family constructed here is new or not?
Up to our best knowledge,
the Inoue surfaces is the only one known family of surfaces with $K^2=7$ and $p_g=0$.
So we intend to show that the surfaces here have certain properties,
which are different from the Inoue surfaces.

\begin{proposition}$h^0(S, \O_S(2K_S))^{inv}=6, h^0(S, \O_S(2K_S))^{\chi_1}=h^0(S, \O_S(2K_S))^{\chi_2}=1$
and $h^0(S, \O_S(2K_S))^{\chi_3}=0.$
\end{proposition}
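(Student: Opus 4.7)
The plan is to pass the computation from $S$ to $V$ via the blowdown $\e$, and then from $V$ to $W$ via the bidouble cover $\tpi$, where each eigenspace becomes an explicit linear system on $W$. Since $\e\colon V\to S$ is an equivariant composition of blowups at smooth points, $\e_*\O_V(2K_V)=\O_S(2K_S)$, so $h^0(S,2K_S)^{\chi}=h^0(V,2K_V)^{\chi}$ for each character of $G$. From $2K_V\equiv\tpi^*(2K_W+\Delta)$ in \eqref{eq:newbican} and the standard decomposition $\tpi_*\O_V=\O_W\oplus\bigoplus_i\O_W(-\L_i)$, the projection formula gives
\[
\tpi_*\O_V(2K_V)=\O_W(2K_W+\Delta)\oplus\bigoplus_{i=1}^{3}\O_W(2K_W+\Delta-\L_i),
\]
and using $2\L_i\equiv\Delta_{i+1}+\Delta_{i+2}$, $\L_i+\Delta_i\equiv\L_{i+1}+\L_{i+2}$, the $\chi_i$-summand rewrites as $\O_W(2K_W+\L_{i+1}+\L_{i+2})$. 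Thus I am reduced to computing four explicit $h^0$'s on $W$.

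For the invariant part, substitute $2K_W+\Delta\equiv-2K_W+\Gamma+\sum_{j}(C_j+C_j')$ from \eqref{eq:newbican}. Each of the six disjoint $(-2)$-curves meets this class in $-2$ (using $\Gamma\cdot C_j=\Gamma\cdot C_j'=0$ since $\Gamma$ is in a different fiber of $g$), so iterated use of $0\to\O_W(D-C)\to\O_W(D)\to\O_C(D|_C)\to 0$ with $h^0(\mathbb{P}^1,\O(-2))=0$ strips all six curves. The residue is $-2K_W+\Gamma$, and the sequence $0\to\O_W(-2K_W)\to\O_W(-2K_W+\Gamma)\to\O_\Gamma(1)\to 0$ combined with $h^0(\O_W(-2K_W))=4$ (Proposition~\ref{prop!antibicanonical}) and $h^1(\O_W(-2K_W))=0$ (Kawamata--Viehweg for the nef and big $-3K_W$) yields $h^0(2K_W+\Delta)=4+2=6$.

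The same $(-2)$-curve stripping applies to each $\chi_i$-class, since a direct intersection check confirms that each $C_j,C_j'$ has strictly negative intersection with $2K_W+\L_{i+1}+\L_{i+2}$. The residues turn out to be $L-E_0+E_3'-E$, $2L-E_1-E_2-E_3-E_3'-E$, and $2L-E_1-E_2-E_3-2E$, respectively. For $\chi_1$, further stripping the $(-1)$-curve $E_3'$ (whose intersection with the residue is $-1$) leaves $\Gamma$, a rigid $(-1)$-curve, so $h^{\chi_1}=1$. For $\chi_2$, the residue is the class of conics through $p_1,p_2,p_3,p$ tangent to $\overline{p_0p_3}$ at $p_3$; in the pencil $\alpha x_1x_2+x_1x_3-x_2x_3=0$ through $p_1,p_2,p_3$ tangent to $\overline{p_0p_3}$ at $p_3$, the condition at $p=(1:s:t)$ picks out the unique value $\alpha=t(s-1)/s$, nonzero by~(I), so the conic is irreducible, and a coefficient-bookkeeping argument excludes any other effective representative, giving $h^{\chi_2}=1$. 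For $\chi_3$, the residue is the class of conics through $p_1,p_2,p_3$ with a double point at $p$; condition~(I) keeps $p$ off every line $\overline{p_ip_j}$ for $i,j\in\{1,2,3\}$, so neither a double line through $p$ nor a line pair meeting at $p$ can satisfy the multiplicity conditions, and $h^{\chi_3}=0$.

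The main obstacle is the $\chi_2$ case, where showing $h^0=1$ (not $\geq 1$) requires both the unique-conic existence argument and excluding reducible or infinitely-near degenerations that could enlarge the linear system; these are ruled out by condition~(I) and by checking nondegeneracy of the resulting $3\times 3$ conic matrix. As a consistency check, Kodaira vanishing for the ample $K_S$ (Theorem~\ref{thm!newsurface}) and Riemann--Roch give $h^0(S,2K_S)=\chi(\O_S)+K_S^2=1+7=8$, matching $6+1+1+0$.
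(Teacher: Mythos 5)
Your proposal is correct and follows essentially the same route as the paper: both reduce each character eigenspace to $h^0(W,\O_W(2K_W+\L_{i+1}+\L_{i+2}))$ via the pushforward decomposition of the bidouble cover and then compute these by stripping the $(-2)$-curves and identifying the residual classes (which you identify correctly), with your $\chi_2$ and $\chi_3$ exclusions via explicit conic coordinates and condition (I) matching the paper's intent. The only real divergence is the invariant part: the paper gets $6$ by subtracting the three eigenspace dimensions from $h^0(2K_S)=K_S^2+1=8$, whereas you compute $h^0(W,\O_W(-2K_W+\Gamma))=6$ directly via Kawamata--Viehweg vanishing for $-3K_W$ and use $8=6+1+1+0$ only as a check --- both are valid.
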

\proof
By the formulae in \cite{singularbidouble}*{Section~2}, for $i=1, 2, 3,$
$$h^0(S, \O_S(2K_S))^{\chi_i}=h^0(V, \O_V(2K_V))^{\chi_i}=h^0(W, \O_W(2K_W+\L_{i+1}+\L_{i+2}))$$
\begin{enumerate}
    \item  By \eqref{eq:newKL},
            \begin{align*}
            2K_W+\L_2+\L_3 &\equiv 4L-4E_0-2E_1'-2E_2'-E_3'-E\\
               &\equiv \Gamma+(C_1+C_1')+(C_2+C_2')+(C_3+E_3'+C_3').
            \end{align*}
            So $h^0(W, \O_W(2K_W+\L_2+\L_3)) \ge 1.$
            Let $D \in |2K_W+\L_2+\L_3|.$
            Then $DC_k=DC_k'=-2$ for $k=1,2$ and $DC_3=DC_3'=-1.$
            So $D\ge C_1+C_1'+C_2+C_2'+C_3+C_3'$ and thus $D':=D-(C_1+C_1'+C_2+C_2'+C_3+C_3')$ is effective.
            $D' \equiv \Gamma+E_3',$ and $D'\Gamma=D'E_3'=-1.$
            Thus $D'=\Gamma+E_3'$ and $D=\Gamma +(C_1+C_1')+(C_2+C_2')+(C_3+E_3'+C_3').$
            Hence $h^0(W, \O_W(2K_W+\L_2+\L_3)) = 1.$
    \item
            By \eqref{eq:newKL},
            \begin{align*}
            2K_W+\L_1+\L_3 &\equiv 5L-3E_0-E_1-2E_1'-E_2-2E_2'-E_3-3E_3'-E \\
               &\equiv (2L-E_1-E_2-E_3-E_3'-E)+\sum\nolimits_{j=1}^3 (C_j+C_j'),
            \end{align*}
            where $|2L-E_1-E_2-E_3-E_3'-E|$ consists of a $(-1)$-curve, which is the strict transform of
            the conic on $\mathbb{P}^2$ passing five points $p_1, p_2, p_3, p_3'$ and $p.$
            So $h^0(W, \O_W(2K_W+\L_1+\L_3)) \ge 1.$
            An similar argument shows that $h^0(W, \O_W(2K_W+\L_1+\L_3)) = 1.$
    \item By \eqref{eq:newKL},
           \begin{align*}
            2K_W+\L_1+\L_2 &\equiv 5L-3E_0-E_1-2E_1'-E_2-2E_2'-E_3-2E_3'-2E\\
               &\equiv (2L-E_1-E_2-E_3-2E)+\sum\nolimits_{j=1}^3 (C_j+C_j')
           \end{align*}
            Clearly, $|2L-E_1-E_2-E_3-2E|=|(L-E_1-E)+(L-E_2-E)-E_3|$ is empty.
            Arguing by contradiction as the proof of Theorem~\ref{thm!newsurface},
            it is easy to show that $h^0(W, \O_W(2K_W+\L_1+\L_2))=0.$
    \end{enumerate}
Hence $h^0(S, \O_S(2K_S))^{\chi_1}=h^0(S, \O_S(2K_S))^{\chi_2}=1$ and $h^0(S, \O_S(2K_S))^{\chi_3}=0.$
Since $h^0(S, \O_S(2K_S))=K_S^2+1=8,$ $h^0(S, \O_S(2K_S))^{inv}=6.$\qed

\begin{corollary}The bicanonical morphism $\varphi:=\varphi_{2K_S} \colon S \rightarrow \mathbb{P}^7$
is not composed with any involution $g_i,$ for $i=1, 2, 3.$
\end{corollary}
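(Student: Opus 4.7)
The plan is to translate the condition ``$\varphi$ is composed with $g_i$'' into a linear-algebraic statement about the $G$-action on $H^0(S,\O_S(2K_S))$ and then read the answer directly from the eigenspace dimensions already computed in the preceding proposition. The bicanonical morphism $\varphi$ is composed with the involution $g_i$ exactly when $\varphi\circ g_i=\varphi$ as morphisms to $\mathbb{P}^7$; since the bicanonical image is nondegenerate in $\mathbb{P}^7$ (any nonzero section vanishing on all of $S$ would be zero, so the evaluation map spans), this forces $g_i^*$ on $H^0(S,\O_S(2K_S))$ to be a scalar, and because $g_i$ has order two that scalar must be $\pm 1$. So the question reduces to whether either the $(+1)$- or the $(-1)$-eigenspace of $g_i$ is all of $H^0(S,\O_S(2K_S))$.

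Under the $G$-action, $H^0(S,\O_S(2K_S))$ decomposes as the direct sum of its invariant part and the three character eigenspaces $H^0(S,\O_S(2K_S))^{\chi_j}$, and $g_i$ acts on $H^0(S,\O_S(2K_S))^{\chi}$ by the scalar $\chi(g_i)$. Since $\chi_i$ is the nontrivial character orthogonal to $g_i$, one has $\chi_i(g_i)=1$ and $\chi_j(g_i)=-1$ for $j\ne i$. Combining this with the dimensions $(6,1,1,0)$ for $(\mathrm{inv},\chi_1,\chi_2,\chi_3)$ supplied by the preceding proposition, the $(+1)$-eigenspace of $g_i$ on $H^0(S,\O_S(2K_S))$ has dimension $6+\dim H^0(S,\O_S(2K_S))^{\chi_i}$, while the $(-1)$-eigenspace has dimension $\sum_{j\ne i}\dim H^0(S,\O_S(2K_S))^{\chi_j}$.

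Plugging in yields $(+1,-1)$-eigenspace dimensions $(7,1)$ when $i=1$ or $i=2$, and $(6,2)$ when $i=3$. Since $h^0(S,\O_S(2K_S))=8$, in no case does an eigenspace fill the whole space, so no $g_i$ acts by a scalar and $\varphi$ is not composed with any of the three involutions. There is essentially no obstacle here: once the eigenspace dimensions of the preceding proposition are in hand, the corollary is a one-line consequence of the character theory of $(\mathbb{Z}/2\mathbb{Z})^2$; what makes the statement genuinely informative for the paper is the contrast with the Inoue surfaces, whose bicanonical morphism factors through exactly one of the three involutions.
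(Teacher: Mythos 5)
Your proposal is correct and matches the paper's intent: the paper gives no explicit proof of this corollary precisely because it follows from the preceding proposition by the standard eigenspace criterion you spell out ($\varphi$ composed with $g_i$ iff $g_i^*$ acts as a scalar on $H^0(S,\O_S(2K_S))$, which by the character decomposition and the dimensions $(6,1,1,0)$ never happens). Your eigenspace counts $(7,1)$, $(7,1)$, $(6,2)$ are all correct, so nothing further is needed.
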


By the corollary, for a surface $S$ in our family,
the pair $(S,G)$ here is a different from any Inuoe surface with the $G$-action
(cf.~\cite{K27I}*{Example 4.1}).
So at least the construction of the pair $(S,G)$ is a new example.

\section{The bicanonical map}
It is known that for an Inoue surface, the bicanonical morphism has degree $2$ (cf.~\cite{K27I}*{Example 4.1}).
In this section, we prove that the our surfaces have birational bicanonical morphism.
Hence they are new surfaces.

\begin{theorem}\label{thm!birational}For a surface $S$ in Theorem~\ref{thm!newsurface},
the bicanonical morphism $\varphi \colon S \rightarrow \mathbb{P}^7$ is birational.
\end{theorem}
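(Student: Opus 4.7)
The plan is to argue by contradiction using the dichotomy from \cite{K27II}: if $\varphi$ is not birational, then $\deg\varphi=2$, and the bicanonical involution $\sigma\in\mathrm{Aut}(S)$ exists and makes $\varphi$ factor through $S/\sigma$. Because $\sigma$ is intrinsic to $\varphi$, it commutes with every automorphism of $S$, in particular with the $G$-action. The preceding corollary rules out $\sigma\in\{g_1,g_2,g_3\}$, so $\sigma$ induces a non-trivial involution $\bar\sigma$ of $\Sigma=S/G$, which lifts through the minimal resolution $\eta$ to an involution $\tilde\sigma$ of $W$. The commutation $\sigma g_i=g_i\sigma$ further implies that $\sigma$ preserves the fixed locus of each $g_i$, and tracing this through the $G$-cover shows that $\tilde\sigma$ preserves each of the three branch divisors $\Delta_1,\Delta_2,\Delta_3$ individually.

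Next I would pin down the action of $\tilde\sigma$ on the distinguished $(-1)$-curves $\Gamma,E,\B_2,\B_3$. Among the components of $\Delta_1=F_b+\Gamma+C_1+C_1'+C_2+C_2'$, only $\Gamma$ has self-intersection $-1$ and only $F_b$ has self-intersection $0$, so $\tilde\sigma(\Gamma)=\Gamma$ and $\tilde\sigma(F_b)=F_b$; similarly $\B_2$ is the unique $(-1)$-component of $\Delta_2$ and $\B_3=\Delta_3$, yielding $\tilde\sigma(\B_2)=\B_2$ and $\tilde\sigma(\B_3)=\B_3$. Since $\tilde\sigma(F_b)=F_b$, $\tilde\sigma$ preserves the pencil $|F|$; hence the reducible fiber $\Gamma+E$, being the unique element of $|F|$ that contains $\Gamma$, is $\tilde\sigma$-invariant, forcing $\tilde\sigma(E)=E$.

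Then I would analyze the restriction $\tilde\sigma|_\Gamma$. On $\Gamma\cong\mathbb{P}^1$, the singleton intersections $\Gamma\cap E$ and $\Gamma\cap\B_3$ are each fixed by $\tilde\sigma|_\Gamma$, contributing two fixed points. On the other hand, $\Gamma\cdot\B_2=3$ by Proposition~\ref{prop!GammaE}, so $\Gamma\cap\B_2$ consists of three points which $\tilde\sigma|_\Gamma$ must permute; an involution of $\mathbb{P}^1$ permuting an odd-size set fixes at least one of its points, producing a third fixed point overall. Since any non-trivial involution of $\mathbb{P}^1$ has exactly two fixed points, we conclude $\tilde\sigma|_\Gamma=\mathrm{id}_\Gamma$. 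The same argument, applied in turn to $E$, $\B_2$, $\B_3$, shows that $\tilde\sigma$ is the identity on each of these four curves.

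To finish, observe that $\Gamma,E,\B_2,\B_3$ are pairwise intersecting and all intersections are transverse (cf.\ Proposition~\ref{prop!GammaE}(3)), so at any intersection point $\tilde\sigma$ fixes two transverse smooth curves; in local analytic coordinates the only holomorphic involution with this property is the identity, so $\tilde\sigma$ is the identity on a neighborhood and hence on all of $W$. Therefore $\bar\sigma=\mathrm{id}_\Sigma$ and $\sigma\in G$, contradicting $\sigma\notin\{g_1,g_2,g_3\}$. The technical heart of the argument is the second paragraph, where one must extract the preservation of each individual $(-1)$-curve from the preservation of the branch data; once this is in hand, the rest reduces to counting fixed points of involutions on $\mathbb{P}^1$ and the standard local form of a holomorphic involution.
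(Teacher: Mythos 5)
Your proof is correct, but it takes a genuinely different route from the paper. The paper also starts from the dichotomy of \cite{K27I}, \cite{K27II}, but then exploits the finer structural consequence that a degree-$2$ bicanonical map forces a genus-$3$ hyperelliptic fibration $f$ with five double fibres and exactly one reducible fibre; it plays $f$ against the two curves $\gamma'$ (elliptic, $K_S\gamma'=1$, $\gamma'^2=-1$) and $e'$ (genus $2$, $K_Se'=6$, $e'^2=-4$) produced in Lemma~\ref{le!gammae}, and derives a purely numerical contradiction via the algebraic index theorem, Riemann--Hurwitz, the bound $h^2(S,\mathbb{C})=3$, and Zariski's lemma. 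You instead use only the existence of the bicanonical involution $\sigma$ and the fact that it is central in $\mathrm{Aut}(S)$: since the Corollary excludes $\sigma\in G$, $\sigma$ descends to a nontrivial involution of $\Sigma$, lifts to $W$, and must preserve each branch divisor because it preserves each $\mathrm{Fix}(g_i)$; the configuration $\Gamma, E, \B_2, \B_3$ (with its odd intersection numbers $\Gamma\B_2=E\B_3=3$ and the nodality statement of Proposition~\ref{prop!GammaE}(3)) then forces this lift to fix three points on each of these rational curves, hence to be the identity on each of them, hence (two transverse pointwise-fixed curves at a node, plus linearization of a finite-order automorphism) the identity on $W$ --- contradiction. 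Your steps all check out: the centrality of $\sigma$, the identification of deck transformations of $S\to\Sigma$ with $G$, the separation of $F_b$ from $\Gamma$ by self-intersection, and the deduction $\tilde\sigma(E)=E$ from invariance of the pencil $|F|$ are all sound, though the passage ``$\sigma$ preserves $\mathrm{Fix}(g_i)$, hence $\tilde\sigma$ preserves each $\Delta_i$'' should be written out carefully on $\Sigma$ (where the $(-2)$-curve components of $\Delta_1,\Delta_2$ have been contracted, so what is literally preserved is $F_b+\Gamma$, $\B_2$ and $\B_3$ --- which is exactly what you use). What your approach buys is independence from the fibration results of \cite{K27II} beyond $\deg\varphi\le 2$ and from Lemma~\ref{le!gammae}; what the paper's approach buys is that it never needs to manipulate automorphisms of $\Sigma$ or $W$ and stays entirely within intersection theory on $S$.
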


To prove the theorem,
first we study the images of the curves $\Gamma$ and $E$ on the surface $S$
(see the diagram \eqref{diag} in Section~3).
Let $e:=\eta(E)$ and $\gamma:=\eta(\Gamma).$
\begin{lemma}\label{le!gammae}
\begin{enumerate}
\item
$\pi^*(\gamma)=2\gamma',$ where $\gamma'$ is a smooth elliptic curve with $K_S\gamma'=1$ and $\gamma'^2=-1.$
\item
Let $e':=\pi^*(e).$
Then $e'$ is a smooth curve of genus $2$ with $K_Se'=6$ and $e'^2=-4.$
Moreover, $e'\gamma'=2.$
\end{enumerate}
\end{lemma}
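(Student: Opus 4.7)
The plan is to work on the intermediate surface $V$ and then push down to $S$ via the blowdown $\e\colon V\to S$. Set $\tilde{\Gamma}:=\tpi^{-1}(\Gamma)_{\mathrm{red}}$ and $\tilde{E}:=\tpi^{-1}(E)$. Since $\Gamma$ is a component of the branch divisor $\Delta_1$ while $E$ is disjoint from the supports of $\Delta_1,\Delta_2,\Delta_3$, the theory of bidouble covers gives $\tpi^*\Gamma=2\tilde{\Gamma}$ with $\tilde{\Gamma}\to\Gamma$ a double cover, and $\tpi^*E=\tilde{E}$ with $\tilde{E}\to E$ generically \'etale of degree $4$. Because $\Gamma$ and $E$ are disjoint from the six $(-2)$-curves $C_1,\ldots,C_3'$, both $\tilde{\Gamma}$ and $\tilde{E}$ are disjoint from the twelve $(-1)$-curves contracted by $\e$, so $\e$ restricts to isomorphisms onto their images $\gamma'$ and $e'$. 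Combining $\pi\circ\e=\eta\circ\tpi$ with $\eta^*\gamma=\Gamma$ and $\eta^*e=E$ (both $\gamma,e$ avoid the nodes of $\Sigma$) then yields $\pi^*\gamma=2\gamma'$ and $\pi^*e=e'$.

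For part~(1), the branch divisor of $\tilde{\Gamma}\to\Gamma$ is $(\Delta_2+\Delta_3)|_{\Gamma}$. Using $2\L_1\equiv\Delta_2+\Delta_3$ and $\L_1\cdot\Gamma=(-2K_W-E_3')\cdot\Gamma=2-0=2$ (the line $\overline{p_0p}$ misses $p_3'$ by condition~(I)), this branch locus has length four on $\Gamma\cong\mathbb{P}^1$. Riemann--Hurwitz gives $g(\tilde{\Gamma})=1$, and the local normal form of the bidouble cover at the four transverse branch points shows that $\tilde{\Gamma}$ is smooth; hence $\gamma'$ is a smooth elliptic curve.

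For part~(2), one computes $E\cdot\Delta_1=E\cdot\Gamma=1$, $E\cdot\Delta_2=E\cdot\B_2=1$, and $E\cdot\Delta_3=E\cdot\B_3=3$ (using Proposition~\ref{prop!GammaE} together with the disjointness of $E$ from $F_b,C_j,C_j'$). Hence $\tilde{E}\to E\cong\mathbb{P}^1$ is a $G$-cover branched at five points with local monodromies $g_1,g_2,g_3,g_3,g_3$; these generate $G$, so $\tilde{E}$ is connected, while transversality of $E\cap\Delta$ (a consequence of $E\not\subset\Delta$ and $\Delta$ being nodal) combined with the local model of a bidouble cover shows $\tilde{E}$ is smooth. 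Riemann--Hurwitz then gives $2g(\tilde{E})-2=4\cdot(-2)+5\cdot 2=2$, so $g(\tilde{E})=2$.

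The intersection numbers are formal consequences of the projection formula applied to \eqref{eq:newbican}. Writing $D:=-2K_W+\Gamma+\sum_{j=1}^{3}(C_j+C_j')$ with $2K_V\equiv\tpi^*D$, one has $D\cdot\Gamma=1$ and $D\cdot E=3$; multiplying by $\deg\tpi=4$ and dividing by the appropriate multiplicities yields $K_S\cdot\gamma'=K_V\cdot\tilde{\Gamma}=1$ and $K_S\cdot e'=K_V\cdot\tilde{E}=6$. Similarly $\gamma'^2=\tilde{\Gamma}^2=\tfrac14(\tpi^*\Gamma)^2=\Gamma^2=-1$, $e'^2=(\tpi^*E)^2=4E^2=-4$, and $e'\cdot\gamma'=\tilde{E}\cdot\tilde{\Gamma}=\tfrac12(\tpi^*E)\cdot(\tpi^*\Gamma)=2E\cdot\Gamma=2$. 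The only genuine subtlety in the whole argument is the smoothness and connectedness of $\tilde{E}$: connectedness is purely the monodromy computation, and smoothness reduces to the local bidouble-cover normal form at the five transverse intersection points of $E$ with $\Delta$. Everything else is bookkeeping with the projection formula and Riemann--Hurwitz.
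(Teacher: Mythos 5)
Your proposal is correct and follows essentially the same route as the paper: reduce to the bidouble cover $\tpi\colon V\to W$ (using that $\Gamma$ and $E$ avoid the contracted curves), identify $\tilde{\Gamma}\to\Gamma$ as the double cover branched at the four points of $\Gamma\cap(\Delta_2+\Delta_3)$ and $\tilde{E}\to E$ as the $G$-cover of $\mathbb{P}^1$ with branch data $(1,1,3)$, then compute all intersection numbers by the projection formula from \eqref{eq:newbican}. The only cosmetic differences are that the paper gets $g(e')=2$ by adjunction rather than Riemann--Hurwitz and cites \cite{survey}*{Corollary~4.21} for irreducibility where you use the monodromy-generation argument; also note that the transversality of $E$ with $\Delta$ should be attributed to Proposition~\ref{prop!GammaE}(3) (the configuration $\Gamma+E+\B_2+\B_3$ being nodal) rather than to condition (B) alone, since $E$ is not a component of $\Delta$.
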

\proof Note that $\Gamma$ and $E$ are disjoint from the $(-2)$-curves,
it is essentially the same to make a discussion on the the covering $\tpi \colon V \rightarrow W$ (see \eqref{diag}).

Note that $\Gamma$ is a component of the total branch divisor $\Delta,$
 and $\Gamma$ is disjoint from $F_b$ and $C_1, \ldots, C_3',$
and $\Gamma$ intersects $\B_2+\B_3$ transversely at four points (cf.~Proposition~\ref{prop!GammaE}).
So $\tpi^*(\Gamma)=2\Gamma',$
and $\tpi|_{\Gamma'} \colon \Gamma' \rightarrow \Gamma$ is a double cover of $\Gamma$
branched over  four different points.
Thus $\Gamma'$ is a smooth elliptic curve.
$\Gamma'^2=\frac 14 4\Gamma^2=-1$ and by \eqref{eq:newbican},
$K_V\Gamma'=\frac 14 4(-2K_W+\Gamma+\sum\nolimits_{j=1}^3(C_j+C_j'))\Gamma=1.$

Note that $E$ is not a component of  the branch divisor, and $E$ intersects the total branch divisor $\Delta$
transversely (cf.~Proposition~\ref{prop!GammaE}),
and $E\Delta_1=E\Delta_2=1,$ $E\Delta_3=3.$
So the restricted $(\mathbb{Z}/2\mathbb{Z})^2$-cover over $E$ is
a $(\mathbb{Z}/2\mathbb{Z})^2$-cover over $E \cong \mathbb{P}^1$
with the data (cf.~\cite{survey}*{Subsection 4.2, Proposition~4.19}):
$D_1=q_1, D_2=q_2, D_3=q_3+ q_4+q_5$ and $L_1=L_2=\O_{\mathbb{P}^1}(2), L_3=\O_{\mathbb{P}^1}(1),$
where $q_1, \ldots, q_5$ are different points of $E.$
By \cite{survey}*{Corollary~4.21},
$E':=\tpi^*(E)$ is irreducible. 
By \cite{singularbidouble}*{Theorem~2}, $E'$ is also smooth.
$E'^2=4 E^2=-4$ and by \eqref{eq:newbican}, $K_VE'=\frac 12 4(-2K_W+\Gamma+\sum\nolimits_{j=1}^3(C_j+C_j'))E=6.$
So $E'$ has genus $2.$

Moreover, $\Gamma'E'=\frac 12 4 \Gamma E=2.$
\qed
\bigskip

Now we are ready to prove Theorem~\ref{thm!birational}.

\proof[The proof of Theorem~\ref{thm!birational}]
Assume by contradiction that $\varphi$ is not birational.
By \cite{K27I} and \cite{K27II},
$\deg \varphi=2$ and $S$ has a genus $3$ hyperelliptic fibration $f \colon S \rightarrow \mathbb{P}^1.$
Moreover, $f$ has five double fibers.
Denote the general fiber of $f$ by $\Phi.$
Then $K_S\Phi=4.$

First we show that $\Phi\gamma'=0.$
By Lemma~\ref{le!gammae}, $K_S(\gamma'+\Phi)=5.$
By algebraic index theorem, $2\Phi \gamma'-1 \le \frac{5^2}{7}$
and thus $\Phi \gamma' \le 2.$
If $\Phi \gamma'=2,$ then $f|_{\gamma'} \colon \gamma' \rightarrow \mathbb{P}^1$ is a double cover.
Since $f$ has five double fibers, $f|_{\gamma'}$ has at least five ramification points on $\gamma'.$
Because $\gamma'$ is a smooth elliptic curve, this contradicts the Riemann-Hurwitz formula.
So $\Phi \gamma' <2.$
Since $f$ has double fibers, $\Phi \gamma'=0.$

Since $K_S(e'+\Phi)=10,$ by the algebraic index theorem,
$2\Phi e'-4 \le \frac {10^2}{7}$ and thus $\Phi e' \le 9.$
Since $f$  has double fibers, $\Phi e'$ is an even integer.

If $\Phi e'=8,$ then $f|_{e'} \colon e' \rightarrow \mathbb{P}^1$ is a morphism of degree $8.$
Denote by $R$ the ramification divisor of $f|_{e'}.$
Since $f$ has five double fibers, $\deg R \ge 4 \times 5=20.$
Since $e'$ is a smooth curve of genus $2,$ this contradicts the Riemann-Hurwitz formula.
The case $\Phi e'=6$ can be excluded by a similar argument.
So $\Phi e' \le 4.$

If $\Phi e'=4$ or $2,$ then the intersection number matrix of $K_S, \gamma', e'$ and $\Phi$ is
$$\left(\begin{array}{cccc}
        7 & 1 & 6 & 4\\
        1 & -1& 2 & 0\\
        6 & 2 & -4& 4\\
        4 & 0 & 4 & 0\end{array}\right)\ \text{or} \ \ \ \
        \left(\begin{array}{cccc}
        7 & 1 & 6 & 4\\
        1 & -1& 2 & 0\\
        6 & 2 & -4& 2\\
        4 & 0 & 2 & 0\end{array}\right).$$
Either matrix is nondegenerate.
This contradicts that $h^2(S, \mathbb{C})=3.$

Thus $\Phi e'=0.$
We have seen that $\Phi \gamma'=0.$
Since $(2\gamma'+e')^2=0,$
by Zariski's Lemma, $2\gamma'+e' \nequiv r\Phi$ for $r \in \mathbb{Q}.$
Since $K_S\gamma'=1,$ $K_Se'=6$ and $K_S\Phi=4,$
$r=2.$
By \cite{K27II}, $f$ has exactly one reducible fiber, which contains exactly two irreducible components.
So $\gamma'$ and $e'$ are the two irreducible components of this fiber.
Then $m\gamma'+ne' \equiv \Phi$ for some positive integers $m,n.$
This contradicts that $2\gamma'+e' \nequiv 2\Phi.$

Hence $\varphi$ is birational.

\section{The intermediate double covers and Bloch's conjecture}
From the construction,
we see that the automorphism group of the surface in Theorem~\ref{thm!newsurface} contains at least three involutions.
Involutions on surfaces of general type with $K^2=7$ and $p_g=0$ are studied in \cite{bidoubleplane} and \cite{Lee}.
Both articles give a list of numerical possibilities.
The surfaces constructed here realize some numerical possibilities of their lists.

\begin{proposition}\label{prop!intermediate}Let $S$ be a surface as in Theorem~\ref{thm!newsurface}.
\begin{enumerate}
\item The involution $g_1$ has $9$ isolated fixed points on $S,$ and $S/g_1$ is a rational surface.
\item The involution $g_2$ has $9$ isolated fixed points on $S,$ and $S/g_2$ is birational to an Enriques surface.
\item The involution $g_3$ has $7$ isolated fixed points on $S.$
      $S/g_3$ has Kodaira dimension $1,$ and $K_{S/g_3}$ is nef.
\end{enumerate}
\end{proposition}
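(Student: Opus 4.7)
The proposition splits into two pieces per involution $g_i$: counting isolated fixed points, and identifying the Kodaira dimension of $S/g_i$. I would treat both uniformly via the intermediate double cover $\pi_i \colon V/g_i \to W$, which is branched over $\Delta_{i+1}+\Delta_{i+2}$ with eigenline $\L_i$, using the data \eqref{eq:newdata1}--\eqref{eq:newdata2}.

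For the fixed-point counts, the standard bidouble-cover dictionary identifies the isolated fixed points of $g_i$ on $V$ with the nodes of $\Delta_{i+1}+\Delta_{i+2}$. Using condition~(B) and the disjointness of $\B_2, \B_3$ from the six $(-2)$-curves (Proposition~\ref{prop!GammaE}), these count $\Delta_2\cdot\Delta_3 = \B_2\B_3 = 1$, $\Delta_1\cdot\Delta_3 = (F_b+\Gamma)\B_3 = 5$, and $\Delta_1\cdot\Delta_2 = (F_b+\Gamma)\B_2 = 7$. The blowdown $\varepsilon$ then produces additional isolated $(-1,-1)$-type fixed points on $S$ by contracting each $g_i$-fixed $(-1)$-curve over a $(-2)$-curve in $\Delta_i$; since $\Delta_1, \Delta_2, \Delta_3$ contain respectively $4, 2, 0$ such $(-2)$-curves and each gives two $(-1)$-curves on $V$ by construction, these add $8, 4, 0$. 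The totals are $9, 9, 7$.

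For the Kodaira dimensions, from \eqref{eq:newKL} I compute $(K_W+\L_1)^2 = -2$ and $(K_W+\L_2)^2 = (K_W+\L_3)^2 = -3$, which combined with $h^0(K_W+\L_i)=0$ (Theorem~\ref{thm!newsurface}) and the expected $h^1$-vanishing yields $p_g = q = 0$ for each $V/g_i$. For \emph{case~(1)}, the rational pencil $|M| = |\B_2+\B_3|$ of Proposition~\ref{prop!GammaE} satisfies $M\cdot(\Delta_2+\Delta_3) = 0$, so the preimage of a general member in $V/g_1$ is two disjoint $\mathbb{P}^1$'s, giving after Stein factorization a $\mathbb{P}^1$-fibration over a double cover $B$ of $\mathbb{P}^1$; analysing the four singular fibers of $|M|$ (exactly two of which -- namely $\B_2+\B_3$ and $M_3 = C_3+2\Theta_3+C_3'$ -- carry components of $\Delta_2+\Delta_3$) together with Riemann--Hurwitz pins $B = \mathbb{P}^1$, making $S/g_1$ rational. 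For \emph{case~(3)}, pulling back $|F|$ gives an elliptic fibration on $V/g_3$, since a generic $F' \in |F|$ is not in the branch $\Delta_1+\Delta_2$ and meets it in $4$ points, so its preimage is an elliptic double cover of $\mathbb{P}^1$; the non-proportionality of $K_W + \L_3 = 2F - (E_1'+E_2'+E_3')$ to $F$ forces $K_{V/g_3}$ not numerically trivial and hence $\kappa = 1$. For nefness of $K_{S/g_3}$, I would use the ramification formula $\pi_3^* K_{S/g_3} = K_S - R_3$, where $R_3 \subset S$ is the image of the $g_3$-fixed preimage of $\B_3$ (a smooth genus-$2$ curve with $R_3^2 = -1$ and $K_SR_3 = 3$), observe $(K_S - R_3)^2 = 0$ and $K_S(K_S-R_3) = 4 > 0$, and verify $(K_S - R_3)\cdot C \geq 0$ for every irreducible curve $C$ by case analysis on the $K_S$-small curves (the tightest case, $\gamma'$ of Lemma~\ref{le!gammae}, gives $R_3\gamma' = 1 = K_S\gamma'$).

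For \emph{case~(2)}, the delicate one, neither pencil gives a clean fibration, so I would work directly on the canonical resolution $\tilde V_2$ (with $K^2 = -6$). I would identify five $(-1)$-curves there -- the ramified preimages of $C_1, C_1', C_2, C_2'$ and of $\Gamma$ (all components of the branch $\Delta_1+\Delta_3$) -- plus a sixth that cascades from the exceptional over the node $\Gamma \cap \B_3$ once the $\Gamma$-preimage is contracted, and contract these to reach a minimal model $Y_2$ with $K^2 = 0$. The bidouble relation $2\L_2 \equiv \Delta_1 + \Delta_3$ then produces an explicit nontrivial $2$-torsion element in $\mathrm{Pic}(Y_2)$, forcing $2K_{Y_2} \equiv 0$; combined with $p_g = q = 0$ this identifies $Y_2$ as Enriques. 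The main obstacle is precisely this last step: the numerical invariants $K^2 = 0, p_g = q = 0, \chi = 1$ are equally consistent with a properly elliptic surface, so the canonical $2$-torsion has to be constructed explicitly from the bidouble structure rather than read off from the numerics.
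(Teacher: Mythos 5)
Your fixed-point counts are correct and amount to the same computation as the paper's (the paper counts the nodes of $S/g_i$: for $g_1$, one node over $\B_2\cap\B_3$ plus the eight $(-2)$-curves $\tpi_1^{-1}C_k,\tpi_1^{-1}C_k'$, $k=1,2$; similarly for the others). Part (1) is also essentially the paper's argument via Stein factorization of $h\circ\tpi_1$ for the pencil $|M|=|\B_2+\B_3|$; the paper closes more cheaply than you do, observing that $q(V_1)\le q(V)=0$ forces the base of the resulting ruling to be $\mathbb{P}^1$, so no fiber-by-fiber Riemann--Hurwitz analysis (which would anyway require knowing how $C_3,C_3'$ distribute among the singular fibers $M_j$) is needed.

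The genuine gap is in part (2), and you have flagged it yourself without closing it: asserting that the bidouble structure ``produces an explicit nontrivial $2$-torsion element forcing $2K_{Y_2}\equiv 0$'' is exactly the statement to be proved, and nothing in your sketch proves it. The paper sidesteps the minimal model entirely: it computes every plurigenus of $V_2$ from the double-cover formula $P_{2m}(V_2)=h^0(2mK_W+(2m-1)\L_2)+h^0(2mK_W+2m\L_2)$, using the rigid decomposition $2mK_W+2m\L_2\equiv 2m\Gamma+m(C_1+C_1'+C_2+C_2')$ to get $P_{2m}=1$ and an analogous fixed-part argument to get $P_{2m+1}=0$, whence $\kappa=0$ and, with $p_g=q=0$, Enriques. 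Your route can in fact be completed -- the identity $2(K_W+\L_2)\equiv 2\Gamma+C_1+C_1'+C_2+C_2'$ pulls back on the canonical resolution to twice the exceptional configuration you contract, so $K_{\tilde V_2}-\mu^*K_{Y_2}$ accounts for the whole right-hand side and $2K_{Y_2}\equiv 0$ follows -- but that computation is the content of the step, and it is absent.

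Part (3) also has gaps as written. ``Verify $(K_S-R_3)\cdot C\ge 0$ by case analysis on the $K_S$-small curves'' is not an argument: since $K_S$ ample gives no a priori bound on $R_3C$ relative to $K_SC$, there is no finite list of curves to check. Likewise ``$K_W+\L_3$ not proportional to $F$ forces $\kappa=1$'' is a non sequitur; an elliptic fibration with $K$ not numerically trivial is compatible with $\kappa\le 0$ on a non-minimal model. The paper gets everything at once from the effective decomposition $2(K_W+\L_3)\equiv (L-E_0)+\sum_{j}(C_j+C_j')$: after contracting the $(-2)$-curves this reads $2K_{S/g_3}\equiv\pi_3^*(F')$ with $F'$ a fiber of the induced fibration on $\Sigma$, giving nefness and $K^2=0$ immediately, and the pencil $|F'|$ pulls back to a pencil inside $|2K_{S/g_3}|$, forcing $\kappa=1$. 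You write $K_W+\L_3=2F-(E_1'+E_2'+E_3')$ but miss that $3F-2\sum_j E_j'=\sum_j(C_j+C_j')$, which is the identity that makes the whole of (3) a one-line consequence.
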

\begin{proof}
\begin{enumerate}
\item Consider the intermediate double cover $\tpi_1 \colon V_1 \rightarrow W$
      of $\tpi \colon V \rightarrow W$ (cf.~\eqref{diag}) associated to the data $\Delta_2+\Delta_3 \equiv 2\L_1.$
      $V_1$ has exactly one node lying over the node of $\B_2+\B_3.$
      The (set theoretic) inverse image $\tpi_1^{-1}C_3$ or $\tpi_1^{-1}C_3'$ is a $(-1)$-curve,
      while the inverse image $\tpi_1^{-1}C_k$ or $\tpi_2^{-1}C_k'$ ($k=1, 2$) is two disjoint $(-2)$-curves.
      Contracting all these curves, we obtain the quotient surface $S/g_1.$
      From the construction (cf.~\eqref{diag}), $S/g_1$ has exactly $9$ nodes
      (the images of the node of $V_1$ and the $8$ $(-2)$-curves $\tpi_1^{-1}C_k, \tpi_1^{-1}C_k'$ ($k=1, 2$).
      Hence $g_1$ has $9$ isolated fixed points on $S.$

      To show that $S/g_1$ is rational, it suffices to show that $V_1$ is rational.
      As it is shown in the proof of Proposition~2.4~(3),
      $|M|=|\B_2+\B_3|$ gives a genus $0$ fibration $h \colon W \rightarrow \mathbb{P}^1.$
      For a general $M,$
      $M(\Delta_2+\Delta_3)=0.$
      So the pullback of $M$ by $\tpi_1$ is two disjoint smooth rational curves.
      Applying Stein factorization to the morphism $h \circ \tpi \colon V_1 \rightarrow \mathbb{P}^1,$
      we conclude that $V_1$ has a genus $0$ fibration.
      As a quotient of $V,$ $q(V_1)=0.$
      Hence $V_1$ is a rational surface and so is $S/g_1.$

\item Consider the intermediate double cover $\tpi_2 \colon V_2 \rightarrow W$
      associated to the data $\Delta_1+\Delta_3 \equiv 2\L_2.$
      $V_2$ has exactly $5$ nodes lying over the $5$ nodes of $F_b+\Gamma+\B_3.$
      Contracting the set theoretic inverse image of $\tpi_2^{-1}(C_j)$
      and $\tpi_2^{-1}(C_j')$ ($j=1, 2, 3$),
      we obtain $S/g_2.$
      It has $9$ nodes
      (the images of the $5$ nodes and the $4$ (-2)-curves $\tpi_2^{-1}C_3, \tpi_2^{-1}C_3'$).
      Hence $g_2$ has $9$ isolated fixed points on $S.$

      Clearly, $p_g(V_2)=q(V_2)=0.$ To show $V_2$ is birational to an Enriques surface.
      It suffices to show that $P_{2m+1}(V_2)=0$ and $P_{2m}(V_2)=1$ for $m \ge 1.$

      Note that $K_{V_2}=\tpi_2^*(K_W+\L_2).$
      So $P_{2m}(V_2)=h^0(W, \O_W(2mK_W+(2m-1)\L_2)+h^0(W, \O_W(2mK_W+2m\L_2)).$
      By \eqref{eq:newKL},
      \begin{align}\label{eq:P2m}
      2mK_W+2m\L_2 \equiv 2m\Gamma+m(C_1+C_1'+C_2+C_2').
      \end{align}
      Recall that $\Gamma$ is a $(-1)$-curve,
      $C_1, C_1', C_2, C_2'$ are $(-2)$-curves and all these curves are disjoint.
      So $h^0(W, \O_W(2mK_W+2m\L_2))=1.$

      By \eqref{eq:newKL}, clearly $|2K_W+\L_2| = \emptyset.$
      For $m \ge 2,$ by \eqref{eq:P2m},
      $$2mK_W+(2m-1)\L_2 \equiv 2(m-1)\Gamma+(m-1)(C_1+C_1'+C_2+C_2')+(2K_W+\L_2).$$
      Note that $\Gamma(2K_W+\L_2)=-2,$ $C_k(2K_W+\L_2)=C_k'(2K_W+\L_2)=-1 (k=1, 2).$
      If $|2mK_W+(2m-1)\L_2| \not =\emptyset,$
      then $2(m-1)\Gamma+(m-1)(C_1+C_1'+C_2+C_2')$ lies in the fixed part of this linear system.
      This contradicts that $|2K_W+\L_2|=\emptyset.$
      Hence $|2mK_W+(2m-1)\L_2|=\emptyset$ and $P_{2m}(V_2)=1.$

      An similar argument by using \eqref{eq:P2m} shows that $P_{2m+1}(V_2)=0$ for $m \ge 1.$
      Hence $V_2$ is birational to an Enriques surface.

\item
Consider the intermediate double cover $\tpi_3 \colon V_3 \rightarrow W$
related to the data $\Delta_1+\Delta_2 \equiv 2\L_3.$
$V_3$ exactly has $7$ nodes lying over the $7$ nodes of the curve $F_b+\Gamma+\B_2.$
Note that the (set theoretic) inverse image $\tpi_3^{-1}C_j$ or $\tpi_3^{-1}C_j'$ ($j=1, 2, 3$) is a $(-1)$-curve.
Contracting these $(-1)$-curves, we obtain $S/g_3.$
$S/g_3$ has $7$ nodes and $g_3$ has $7$ isolated fixed points on $S.$
By construction,
there are double covers $\pi_3 \colon S/g_3 \rightarrow \Sigma$ and $p_3 \colon S \rightarrow S/g_3$
such that the following diagram commutes.
$$\xymatrix@R=1em{
V \ar^{\e}"1,3" \ar"2,2" \ar_{\tpi}"3,1"& & S \ar_{p_3}"2,4" \ar@{-}_{\pi}"2,3" &\\
& V_3 \ar"2,4" \ar^{\tpi_3}"3,1" & \ar"3,3" & S/g_3 \ar^{\pi_3}"3,3"\\
W \ar_{\eta}"3,3" & & \Sigma &
}$$

By \eqref{eq:newKL},
$$2K_{V_3} \equiv \tpi_3^*(2K_W+2\L_3) \equiv \tpi_3^*(L-E_0+C_1+C_2+C_3+C_1'+C_2'+C_3').$$
As is shown in Section~2, $|L-E_0|$ gives a genus $0$ fibration $g \colon W \rightarrow \mathbb{P}^1,$
and all the $(-2)$-curves $C_1, \ldots, C_3'$ are contained in the fibers.
It induces a fibration on $g' \colon \Sigma \rightarrow \mathbb{P}^1.$
Denote the general fiber of $g'$ by $F'.$
From the diagram above, $2K_{S/g_3} \equiv \pi_3^*(F').$
Thus $K_{S/g_3}$ is nef and $K_{S/g_3}^2=0.$
Since $(L-E_0).(\Delta_1+\Delta_2)=4,$
$|2K_{S/g_3}|$ gives an elliptic fibration of $S/g_3.$
So $S/g_3$ has Kodaira dimension $1.$
\end{enumerate}
\end{proof}

\begin{remark}\label{re!intermediate}
We remark that (2) (respectively (3)) realize some numerical
possibilities of case a) (respectively case b)) of \cite{bidoubleplane}*{Theorem~4.1}.
(1), (2) and (3) realize respectively the following possible cases in the list of \cite{Lee}:
 \begin{enumerate}
 \item $k=9, K_W^2=-2,$ $W$ is a rational surface, and $B_0={\Gamma_0 \atop (3,0)}+{\Gamma_1 \atop (1,-2)}.$
 \item $k=9, K_W^2=-2,$ $W$ is birational to an Enriques surface, and $B_0={\Gamma_0 \atop (3,-2)}.$
 \item $k=7, K_W^2=0,$ $W$ is minimal proper elliptic, and  $B_0={\Gamma_0 \atop (2,-2)}.$
 \end{enumerate}
All these cases are different from the Inoue surfaces.
See \cite{Lee}*{Section~5} and Section~6.
\end{remark}

Recently, it is shown in \cite{BlochInoue} that
the Bloch's conjecture (\cite{Bloch}) holds for Inoue surfaces with $K^2=7$ and $p_g=0,$
by using the method of ``enough automorphisms" (\cite{enough1} and \cite{enough2}).
We observe that the key results in \cite{BlochInoue} also apply for our surfaces.

\begin{theorem}
Let $S$ be a surface as in Theorem~\ref{thm!newsurface}.
Then $S$ satisfies the Bloch's conjecture, i.e.,
the kernel $T(S)$ of the natural morphism $A_0^0(S) \rightarrow Alb(S)$ is trivial.
In particular, $A_0^0(S)=0.$
\end{theorem}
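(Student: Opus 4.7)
The plan is to follow precisely the ``enough automorphisms'' strategy used in \cite{BlochInoue} for Inoue surfaces, feeding in the three quotients analyzed in Proposition~\ref{prop!intermediate}. Since $S$ is a minimal surface of general type with $p_g(S)=0$, the inequality $\chi(\O_S)\ge 1$ forces $q(S)=0$; hence $\mathrm{Alb}(S)=0$ and $A_0^0(S)=T(S)$, so it suffices to prove $T(S)=0$.

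The main formal tool I would use is the following consequence of the projection formulas for the quotient map $p_\sigma\colon X\to X/\sigma$ of a smooth projective surface by an involution: one has $p_\sigma^{\,*}\,(p_\sigma)_{*} = \mathrm{id}+\sigma^{*}$ on $A_0(X)$. If both $q(X/\sigma)=0$ and $T(X/\sigma)=0$, then for any $z\in T(X)$ the pushforward $(p_\sigma)_{*} z$ lies in $T(X/\sigma)=0$, so $z+\sigma^{*}z=0$, i.e., $\sigma^{*}=-\mathrm{id}$ on $T(X)$. This is the key input isolated in \cite{BlochInoue}.

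I would then apply this lemma to each of the three nontrivial involutions of $G$. By Proposition~\ref{prop!intermediate}, the quotient $S/g_1$ is rational, $S/g_2$ is birational to an Enriques surface, and $S/g_3$ is a properly elliptic surface of Kodaira dimension one. In each case $q=0$, and $T$ vanishes: trivially for rational surfaces and, via the Bloch--Kas--Lieberman theorem for surfaces of Kodaira dimension at most one, for both the Enriques case and $S/g_3$. Working on smooth models of the (nodal) quotients (which does not affect $A_0$), the formal lemma yields $g_i^{*}=-\mathrm{id}$ on $T(S)$ for every $i=1,2,3$.

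The conclusion is then immediate from the group-theoretic relation $g_3=g_1g_2$ in $G\cong(\mathbb{Z}/2\mathbb{Z})^2$, which transports to $g_3^{*}=g_1^{*}\circ g_2^{*}$ on $T(S)$. Substituting the values from the previous step gives $-\mathrm{id}=(-\mathrm{id})\circ(-\mathrm{id})=\mathrm{id}$ on $T(S)$, which is possible only when $T(S)=0$. I expect the only delicate point to be tracking $A_0$ of the singular quotients versus their smooth models across the commutative diagram~\eqref{diag}; this is routine since nodes contribute nothing to $A_0$, and the book-keeping is already spelled out in \cite{BlochInoue}.
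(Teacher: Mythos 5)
Your proposal is correct and is essentially the paper's proof: the paper simply invokes \cite{BlochInoue}*{Proposition~1.3, Corollary~1.5} together with Proposition~\ref{prop!intermediate}, and what you have written out (the relation $p_\sigma^*(p_\sigma)_*=\mathrm{id}+\sigma^*$, the vanishing of $T$ for the three quotients of Kodaira dimension $\le 1$ via Bloch--Kas--Lieberman, and the contradiction from $g_3=g_1g_2$) is precisely the content of those cited results. The only step worth making explicit is that $-\mathrm{id}=\mathrm{id}$ on $T(S)$ yields $T(S)=0$ because $T(S)$ is divisible and, by Roitman's theorem, torsion-free; this is part of the machinery of \cite{BlochInoue}.
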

\begin{proof} The first statement follows directly from \cite{BlochInoue}*{Proposition~1.3, Corollary~1.5} and Proposition~\ref{prop!intermediate}.
Since in our case $Alb(S)$ is trivial, Bloch's conjecture says that $A_0^0(S)=0.$
\end{proof}

\section{Remarks on Related Topics}
In a previous version of \cite{Lee},
it was claimed in that three quotients of an Inoue surface by the involutions were all rational.
The claim turns out to be wrong.
We will point out that one of the quotient is birational to an Enriques surface.
In \cite{bidoubleplane},
a family of surfaces of general type with $K^2=7$ is constructed as bidouble planes.
However, here we show that the family with $K^2=7$ in \cite{bidoubleplane} consists of Inoue surfaces.

We first stick to the same notation with \cite{bidoubleplane}*{Section~4.2}.

Let $p, p_1, p_2, p_3$ be four points in general position of $\mathbb{P}^2,$
and let $p_k'$ ($k=1, 2$) be the infinitely near point of $p_k$ corresponding to the line $\overline{p_kp}.$
Denote by $T_j$ ( $j=1, 2, 3$) the line $\overline{p_jp}$ and by $T_4$ a general line passing through $p.$
Denote by $C_1, C_2$ two distinct  smooth conics passing through $p_1, p_2, p_1', p_2'.$
Denote by $L$ a quintic passing though $p,$ having a $(2, 2)$-singularity at $p_k$ ($k=1, 2$),
and having an ordinary triple point at $p_3$ (See the last paragraph in \cite{bidoubleplane}*{Subseciton~4.2.1}).

We claim that $L$ is a union of a conic $C$ and a cubic $\Gamma,$
where $C$ is the conic passing through $p_1, p_2, p_1', p_2', p_3,$
and $\Gamma$ is a cubic passing through $p, p_1, p_2, p_1', p_2'$ and having an ordinary double point at $p_3.$
Note that $L.C=11.$ The claim follows from B\'ezout's Theorem.

In \cite{bidoubleplane}, it is claimed that the smooth minimal model of the bidouble plane associated to the following
branch divisors is a surface of general type with $K^2=7$ and $p_g=0:$
\begin{align}
D_1=L=C+\Gamma,\
D_2=T_1+C_1+C_2,\
D_3=T_2+T_3+T_4. \label{eq:bidoubleplane}
\end{align}
We explain how to find the smooth minimal model of the bidouble plane,
and we show that this is indeed an Inoue surface with $K^2=7.$

\newcommand{\Y}{\widetilde{Y}}
\newcommand{\T}{\widetilde{T}}
\newcommand{\C}{\widetilde{C}}
Let $\sigma \colon \Y \rightarrow \mathbb{P}^2$ be the blowup of six point $p, p_1, p_2, p_3, p_1', p_2'.$
Denote by $L$ the pullback of a general line of $\mathbb{P}^2$
and by $E$ (respectively, $E_j$, $E_k'$) the total transform of $p$
(respectively, $p_j(j=1, 2, 3)$, $p_k'(k=1, 2)$).
We also denote by $\T_1$ the strict transform of $T_1,$ and similarly for other curves.

Then $\Y$ is the minimal resolution of a $4$-nodal cubic surface $Y.$
As is known, up to a (projective) isomorphism, there is only one $4$-nodal cubic surface.
We explain some geometry of $\Y.$
\begin{enumerate}
\item $\Y$ has exactly four $(-2)$-curves: $\T_1=L-E_1-E_1'-E,$ $\T_2=L-E_2-E_2'-E,$
      $N_1=E_1-E_1'$ and $N_2=E_2-E_2'.$
      These curves correspond to four nodes of $Y.$
\item $\Y$ contains nine $(-1)$-curves, corresponding to nine lines on the $4$-nodal cubic surface $Y.$
      Among these curves, there are exactly three, which are disjoint from the $(-2)$-curves:
      $\T_3=L-E_3-E,$
      $\C=2L-E_1-E_1'-E_2-E_2'-E_3$ and $E_3.$
      They correspond to three lines on $Y$ which do not pass any nodes.
      In particular, they are determined by the $4$-nodal cubic surface $Y.$
\item Note that
      \begin{align*}
      \T_4\in |L-E|, &&\T_4+\C \equiv -K_{\Y},\\
      \C_1,\C_2 \in|2L-E_1-E_1'-E_2-E_2'|,&& \C_1+\T_3\equiv \C_2+\T_3 \equiv -K_{\Y},\\
      \tilde{\Gamma}\in |3L-E_1-E_1'-E_2-E_2'-2E_3-E|,&& \tilde{\Gamma}+E_3 \equiv -K_{\Y}.
      \end{align*}
      So the divisor classes of $\T_4, \C_1, \C_2$ and $\tilde{\Gamma}$ are also determined by
      the $4$-nodal cubic surface.
\end{enumerate}

The total transforms of $D_1,D_2,D_3$ on $\Y$ are
\begin{align*}
\sigma^*(D_1)&=\C+\tilde{\Gamma}+2E_1+2E_1'+2E_2+2E_2'+3E_3+E,\\
\sigma^*(D_2)&=\T_1+\C_1+\C_2+N_1+2E_1'+E+2(E_1+E_1'+E_2+E_2'),\\
\sigma^*(D_3)&=\T_2+\T_3+\T_4+N_2+2E_2'+E_3+3E.
\end{align*}
Apply the normalization procedure in the theory of bidouble covers (cf.~\cite{singularbidouble}*{Section~2, Remark~3}),
we obtain three new divisors:
\begin{align}
\tilde{D}_1&=\C+\tilde{\Gamma}, \nonumber \\
\tilde{D}_2&=\T_1+\C_1+\C_2+N_1+E_3, \label{eq:normalization}\\
\tilde{D}_3&=\T_2+\T_3+\T_4+N_2.\nonumber
\end{align}

The bidouble cover $\tpi \colon \widetilde{S} \rightarrow \Y$ associated to \eqref{eq:normalization} is birational to the bidouble plane constructed by \eqref{eq:bidoubleplane}.
Using the above explanation of the geometry of $\Y$ (i.e., the $4$-nodal cubic surface),
and comparing \eqref{eq:normalization} with \cite{K27I}*{Example~4.1~(I)},
we conclude that the smooth minimal model of $\widetilde{S}$
(and thus of the bidouble plane) is an Inoue surface.
\bigskip

Now we point out a mistake in \cite{Lee}.
This observation is due to Carlos~Rito.
Here we  use the notation of \cite{K27I}*{Example~4.1},
as \cite{Lee} uses almost the same notation
(except denoting by $P$ the minimal resolution $\Sigma$ of the $4$-nodal cubic surface).
In \cite{Lee}*{Section 5, paragraph 4},
the author writes ``Also, $H^0(T_2, \O_{T_2}(2K_{T_2}))=0$ by a similar argument as the case $i=1$".
Here $T_2$ is a double cover of $\Sigma$ associated to $D_1+D_3 \equiv 2L_2.$
However,
we will show that $H^0(T_2, \O_{T_2}(2K_{T_2}))=1.$

It suffices to show $h^0(\Sigma, \O_{\Sigma}(2K_{\Sigma}+L_2))=0$ and $h^0(\Sigma, \O_{\Sigma}(2K_{\Sigma}+2L_2))=1,$
where $L_2=6l-2e_1-2e_2-2e_3-2e_4-3e_5-3e_6$ (\cite{K27I}*{Example~4.1}~(II)).
Since $2K_{\Sigma}+L_2=-e_5-e_6,$
clearly $h^0(\Sigma, \O_{\Sigma}(2K_{\Sigma}+L_2))=0.$
\begin{align*}
2K_{\Sigma}+2L_2&=6l-2e_1-2e_2-2e_3-2e_4-4e_5-4e_6\\
&\equiv (l-e_1-e_2-e_5)+(l-e_3-e_4-e_5)+(l-e_1-e_4-e_6)\\
&+(l-e_2-e_3-e_6)+2(l-e_5-e_6).
\end{align*}
Note that $l-e_1-e_2-e_5,$ $l-e_3-e_4-e_5,$ $l-e_1-e_4-e_6$ and $l-e_2-e_3-e_6$ are
$(-2)$-curves, and $(l-e_5-e_6)$ is a $(-1)$-curve (\cite{K27I}*{Figure~1}),
and all these curves are disjoint.
Hence $h^0(\O_{\Sigma}, \O_{\Sigma}(2K_{\Sigma}+2L_2))=1.$

Finally, as a comparison to Proposition~\ref{prop!intermediate} and Remark~\ref{re!intermediate},
we remark that $T_2$ is birational to an Enriques surface as described in \cite{bidoubleplane},
and it realizes the case $k=9, K_W^2=-2$ and $B_0={\Gamma_0 \atop (3,0)}+{\Gamma_1 \atop (1,-2)}$ in the list of \cite{Lee}.

\begin{bibdiv}
  \begin{biblist}
\bib{enough2}{article}{
  author={R.~Barlow},
  title={Rational equivalence of zero cycles for some more surfaces with $p_g=0$},
  journal={Invent.~Math.~79 (1985), no.~2},
  pages={303-308},
  label={Bar85},
  }
\bib{BHPV}{book}{
  author={W.~Barth},
  author={K.~Hulek},
  author={C.~Peters},
  author={A.~Van de Ven},
  title={Compact complex surfaces, 2nd edition, Ergebnisse der Mathematik und ihrer Grenzgebiete, 3. Folge,
         Band 4},
  publisher={Springer 2004},
  label={BHPV},
  }
\bib{BlochInoue}{article}{
  author={I.~Bauer},
  title={Bloch's conjecture for Inoue surfaces with $p_g=0, K^2=7$},
  journal={arXiv:1210.4287},
  label={Bau12},
  }
\bib{survey}{article}{
  author={I.~Bauer},
  author={F.~Catanese},
  author={R.~Pignatelli},
  title={Surfaces with geometric genus zero: a survey},
  journal={Proceedings of the conference Complex and Differential Geometry held in
            Hannover, September 14 - September 18, 2009},
  pages={1-48},
  label={BCP11},
  }

\bib{Inouemfd}{article}{
    author={I.~Bauer},
    author={F.~Catanese},
    title={Inoue type manifolds and Inoue surfaces:
           a connected component of the moduli space of surfaces with $K^2 = 7, p_g=0$},
    journal={arXiv:1205.7042},
    label={BC12},
    }
\bib{Bloch}{article}{
    author={S.~Bloch},
    title={$K_2$ of Artinian $Q$-algebras, with application to algebraic cycles},
    journal={Comm.~Algebra 3 (1975)},
    pages={405-428},
    label={Blo75},
    }
\bib{Campedelli}{article}{
    author={L.~Campedelli},
    title={Sopra alcuni piani doppi notevoli con curva di diramazioni del decimo ordine},
    journal={Atti Acad. Naz. Lincei 15 (1932)},
    pages={536-542},
    label={Cam32},
    }

\bib{bidoublecover}{article}{
  author={F.~Catanese},
  title={On the moduli spaces of surfaces of general type},
  journal={J. Diff. Geom.19 (1984), no. 2},
  pages={483-515},
  label={Cat84},
  }
\bib{singularbidouble}{article}{
  author={F.~Catanese},
  title={Singular bidouble covers and the construction of interesting algebraic surfaces},
  journal={Algebraic geometry: Hirzebruch 70 (Warsaw, 1998), 97-120, Contemp. Math., 241,
           Amer. Math. Soc., Providence, RI, 1999},
  label={Cat99},
  }
\bib{DMP}{article}{
  author={I.~Dolgachev},
  author={M.~Mendes Lopes},
  author={R.~Pardini},
  title={Rational surfaces with many nodes},
  journal={Compositio Math. 132 (2002)},
  pages={349-363},
  label={DMP02},
  }

\bib{Topics}{book}{
    author={I.~Dolgachev},
    title={Topics in classical algebraic geometry, Part I},
    publisher={manuscript, see www.math.lsa.umich.edu/ idolga/lecturenotes.html.}
    label={Do},
    }
\bib{Godeaux}{article}{
    author={L.~Godeaux},
    title={Les involutions cycliques appartenant \'a une surface alg\`ebrique},
    journal={Actual.~Sci.~Ind., 270, Hermann, Paris, 1935},
    label={Go35},
    }
\bib{Inoue}{article}{
    author={M.~Inoue},
    title={Some new surfaces of general type},
    journal={Tokyo J.~ of Math., Vol.~17 No.~2 (1994)},
    pages={295-319},
    label={In94},
    }
\bib{enough1}{article}{
    author={H.~Inose},
    author={M.~Mizukami},
    title={Rational equivalence of $0$-cycles on some surfaces of general type with $p_g=0$},
    journal={Math.~Ann.~244 (1979), no.3},
    pages={205-217},
    label={IM79},
    }
\bib{Lee}{article}{
    author={Y.~Lee},
    author={Y.~Shin},
    title={Involutions on a surface of general type with $p_g=q=0$, $K^2=7$}
    journal={arXiv:1003.3595},
    label={LS12},
    }
\bib{K27I}{article}{
    author={M.~Mendes~Lopes},
    author={R.~Pardini},
    title={The bicanonical map of surfaces with $p_g=0$ and $K^2 \ge 7$},
    journal={Bull.~London~Math.~Soc.~33 (2001)},
    pages={265-274},
    label={MP01},
    }
\bib{K27II}{article}{
    author={M.~Mendes~Lopes},
    author={R.~Pardini},
    title={The bicanonical map of surfaces with $p_g=0$ and $K^2 \ge 7,$ II},
    journal={Bull.~London~Math.~Soc.~35 (2003)},
    pages={337-343},
    label={MP03},
    }
\bib{bidoubleplane}{article}{
  author={C.~Rito},
  title={Some bidouble planes with $4 \le K^2 \le 7$},
  journal={arXiv:1103.2940},
  label={Ri12},
  }
  \end{biblist}
\end{bibdiv}

\end{document}